\newtheorem{thm}{Theorem}
\crefname{thm}{Theorem}{Theorems}
\Crefname{thm}{Theorem}{Theorems}
\newtheorem{lem}[thm]{Lemma}
\crefname{lem}{Lemma}{Lemmas}
\Crefname{lem}{Lemma}{Lemmas}
\newtheorem{prop}[thm]{Proposition}
\crefname{prop}{Proposition}{Propositions}
\Crefname{prop}{Proposition}{Propositions}
\newtheorem{cor}[thm]{Corollary}
\crefname{cor}{Corollary}{Corollaries}
\Crefname{cor}{Corollary}{Corollaries}
\newtheorem{conj}[thm]{Conjecture}
\crefname{conj}{Conjecture}{Conjectures}
\Crefname{conj}{Conjecture}{Conjectures}
\crefname{qn}{Question}{Questions}
\Crefname{qn}{Question}{Questions}
\theoremstyle{definition}
\DeclareMathAlphabet\mathbfcal{OMS}{cmsy}{b}{n}
\newenvironment{lemprimed}[1]%
{%
    \addtocounter{thm}{-1}
    \begin{lem}%
}{%
    \end{lem}%
}
\newenvironment{thmprimed}[1]%
{%
    \addtocounter{thm}{-1}
    \begin{thm}%
}{%
    \end{thm}%
}
\newcommand{\Z}{\mathbb{Z}}
\newcommand{\Q}{\mathbb{Q}}
\newcommand{\R}{\mathbb{R}}
\newcommand{\F}{\mathcal{F}}
\newcommand{\es}{\emptyset}
\newcommand{\sm}{\setminus}
\newcommand{\BL}[1]{2^{[#1]}}
\newcommand{\ind}[1]{1_{#1}}
\newcommand{\sym}[1]{{#1}^{\textrm{sym}}}
\newcommand{\widebar}[1]{\bar{#1}}
\newcommand{\Ac}{\widebar{A}}
\newcommand{\Bc}{\widebar{B}}
\newcommand{\rpart}[1]{$#1$-partition}
\newcommand{\modpart}[1]{$(1\text{ mod }#1)$-partition}
\newcommand{\clone}{copy}
\newcommand{\posetcopy}{copy}
\begin{document}

\title{Partitioning the Boolean lattice into copies of a poset}

\author{%
    Vytautas Gruslys\thanks{%
        Department of Pure Mathematics and Mathematical Statistics,
        University of Cambridge,
        Wilberforce Road,
        CB3 0WA Cambridge,
        United Kingdom;
        e-mail: \{\texttt{v.gruslys,i.leader,i.tomon}\}\texttt{@cam.ac.uk}\,.
    }
    \and
    Imre Leader\footnotemark[1]
    \and
    Istv\'an Tomon\footnotemark[1]
}

\maketitle

\begin{abstract}

    Let $P$ be a poset of size $2^k$ that has a greatest and a least element. We
    prove that, for sufficiently large $n$, the Boolean lattice $\BL{n}$ can be
    partitioned into copies of $P$.  This resolves a conjecture of Lonc.

\end{abstract}

\section{Introduction} \label{sec:introduction}

    Let $\BL{n}$ denote the \emph{Boolean lattice} of dimension $n$, that is,
    the poset (partially ordered set) whose elements are the subsets of $[n] =
    \{1, \dotsc, n\}$, ordered by inclusion.

    An important property of the Boolean lattice is that any finite poset $P$
    can be embedded into $\BL{n}$ for sufficiently large $n$. Here by an
    \emph{embedding} of a poset $P$ into a poset $Q$ we mean an injection $f : P
    \to Q$ such that $f(x) \le_Q f(y)$ if and only if $x \le_P y$. For any
    embedding $f : P \to Q$, we call the image $f(P)$ a \emph{copy} of $P$ in
    $Q$.

    Now, if $P$ is fixed and $n$ is large, then $\BL{n}$ contains many copies of
    $P$.  So a natural question arises: can $\BL{n}$ be partitioned into copies
    of $P$?  Of course, for such a partition to exist, the size of $P$ must
    divide the size of $\BL{n}$, that is, $|P|$ must be a power of $2$ (we would
    like to emphasise that we denote by $|P|$ the number of elements of $P$ and
    not the number of relations).  Moreover, $P$ must have a greatest and a
    least element. Lonc~\cite{Lonc1991} conjectured that these obvious necessary
    conditions are in fact sufficient.

    \begin{conj}[Lonc] \label{conj:main}

        Let $P$ be a poset of size $2^k$ with a greatest and a least element.
        Then, for sufficiently large $n$, the Boolean lattice $\BL{n}$ can be
        partitioned into copies of $P$.

    \end{conj}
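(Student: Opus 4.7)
The plan is to reduce Lonc's conjecture to exhibiting a single $n_0$ for which $\BL{n_0}$ admits a partition into copies of $P$. Once such an $n_0$ is found, the product identification $\BL{n} \cong \BL{n_0} \times \BL{n-n_0}$ lets us apply the same tiling inside each fibre $\BL{n_0} \times \{S\}$ for $S \in \BL{n-n_0}$, extending the partition to $\BL{n}$ for every $n \geq n_0$.

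To produce such an $n_0$, I would introduce an intermediate tile $Q$---most naturally a product of chains $C_\ell^d$, or a small Boolean lattice---and establish two facts: (i) $Q$ itself can be partitioned into copies of $P$, and (ii) some $\BL{n}$ can be partitioned into copies of $Q$. Composing the two tilings then gives the desired partition. Fact (ii) should be reachable via iterated symmetric chain decompositions, grouping chains of length $\ell$ coordinate-wise into copies of $C_\ell^d$; the power-of-two divisibility condition on $|P|$ ensures that the counts match up. For (i), since $P$ has a greatest and a least element, every copy in $Q$ lies inside a single interval, so $Q$ must be flexible enough for such intervals to interlock and fill $Q$ exactly.

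I expect the main obstacle to be fact (i). The natural attack is induction on $k$, with base case $k=1$, where $P$ is a $2$-chain and $\BL{1}$ itself tiles into copies of $P$. For the inductive step, the difficulty is that a general $P$ of size $2^k$ does not decompose canonically into two halves of size $2^{k-1}$, so one cannot simply glue two smaller tilings. I would approach this through a \emph{strong family} (the notion hinted at in the paper's preamble macros): a family of embeddings of $P$ into $Q$ whose images partition $Q$, together with auxiliary structure recording how the copies interlock. The $2$-divisibility of $|P|$ should then enter through a doubling construction---pairing copies of a suitable subposet of $P$ with shifted or complemented copies inside an enlarged ambient such as $Q \times C_2$---that upgrades a strong family for a poset of size $2^{k-1}$ to one for $P$. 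Making this doubling go through uniformly for every $P$ with top and bottom, without relying on a canonical splitting of $P$, is where the real combinatorial work is concentrated.
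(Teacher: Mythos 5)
Your opening reduction is fine: if $\BL{n_0}$ tiles into copies of $P$, then for every $n \ge n_0$ the identification $\BL{n} \cong \BL{n_0} \times \BL{n-n_0}$ tiles each fibre $\BL{n_0} \times \{S\}$ separately, and each such fibre is a genuine copy of $\BL{n_0}$, so the copies of $P$ it produces really are copies in $\BL{n}$. So ``sufficiently large $n$'' reduces to ``some $n$'', and this is exactly how the paper uses \Cref{prop:buildbigger} implicitly at the end.

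The substantial gap is everything after that. You propose to find an intermediate $Q$ (a product of chains $C_\ell^d$, say) with (ii) some $\BL{n}$ tiling into copies of $Q$ and (i) $Q$ tiling into copies of $P$, and you correctly flag (i) as the crux --- but you offer no mechanism for it. The ``strong family'' you invoke is an unused macro in the preamble, not a tool developed anywhere in the paper, and the ``doubling construction'' pairing subposets of $P$ across $Q \times C_2$ founders on precisely the problem you name: a general $P$ of size $2^k$ has no canonical half of size $2^{k-1}$, and there is no reason an arbitrary $P$ with top and bottom should tile any fixed product of chains. (If it did, the problem would already be solved by Lonc's chain-product result.) In other words, your plan reduces the conjecture to an unproved statement at least as hard as the conjecture itself, with no new leverage. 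You should also note that (ii) forces $\ell$ to be a power of two for divisibility, so $Q$ is essentially a Boolean lattice anyway, making the reduction circular.

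The paper's actual route is genuinely different and avoids the need for any intermediate tile. The key move is to \emph{relax} the notion of a partition in two orthogonal ways: an \rpart{r} (a weighted cover where every element has multiplicity exactly $r$) and a \modpart{r} (multiplicity $\equiv 1 \pmod r$). Both of these are much easier to construct than an exact partition: the \rpart{r} (\Cref{lem:r-partition}) comes from a symmetrisation/averaging argument over permutations of the ground set, reducing to a weight-allocation problem on levels of $\BL{n}$; the \modpart{r} (\Cref{lem:1-mod-r-partition}) comes from allowing $\Z$-valued weights, showing that indicator differences $\ind{\{x\}}-\ind{\{y\}}$ are realisable, and then reducing mod $r$. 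The heavy lifting is the purely combinatorial \Cref{thm:general}: given an \rpart{r} and a \modpart{r} of a finite set $S$ by a family $\F$, some power $S^N$ tiles exactly into copies of members of $\F$. This product-system argument is where the work you expected in your ``doubling construction'' actually lives, and it is done without ever needing $P$ to tile a concrete auxiliary poset. So your proposal is a sketch with the hard step left open, and the paper resolves that step by a conceptually different relaxation-then-amplification strategy rather than by an intermediate tile.
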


    The case where $P$ is a chain of size $2^k$ was originally conjecture by
    Sands~\cite{Sands1985}. Griggs~\cite{Griggs1988} proposed a slightly
    stronger conjecture that, for any positive integer $c$ and for sufficiently
    large $n$, it is possible to partition $\BL{n}$ into chains of length $c$
    and at most one other chain. Both conjectures were proved by
    Lonc~\cite{Lonc1991}. The question of minimising the dimension $n$ in
    Griggs' conjecture in terms of the length of the chain $c$ has received
    attention from several authors, including Elzobi and Lonc~\cite{Elzobi2003}
    and Griggs, Yeh and Grinstead~\cite{Griggs1987}. Recently,
    Tomon~\cite{Tomon2016} proved that the smallest sufficient $n$ is of order
    $\Theta(c^2)$. Related questions on partitioning $\BL{n}$ into chains of
    almost equal lengths have also been examined, by F\"uredi \cite{Furedi1985},
    Hsu, Logan, Shahriari and Towse \cite{Hsu2002,Hsu2003} and Tomon
    \cite{Tomon2015}.
    
    As we mentioned in the previous paragraph, Lonc himself verified
    \Cref{conj:main} in the case where $P$ is a chain.  Furthermore, it is easy
    to extend this result to products of chains. In fact, for any two posets $P,
    Q$, if $\BL{n}$ can be partitioned into copies of $P$ and $\BL{m}$ can be
    partitioned into copies of $Q$, then $\BL{n+m}$ can be partitioned into
    copies of $P \times Q$.  However, apart from some small cases that can be
    checked by hand, chains and their products were the only two cases for which
    Lonc's conjecture had been confirmed.

    In this paper we resolve the conjecture in full generality.

    \begin{thm} \label{thm:main}

        Let $P$ be a poset of size $2^k$ with a greatest and a least
        element. Then, for sufficiently large $n$, the Boolean lattice $\BL{n}$
        can be partitioned into copies of $P$.

    \end{thm}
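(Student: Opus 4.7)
The plan is first to reduce the theorem to constructing a single explicit partition, and then to build that partition out of chain partitions of $\BL{m}$ via Lonc's theorem.

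As a reduction, it suffices to find \emph{one} dimension $m$ for which $\BL{m}$ can be partitioned into copies of $P$. Indeed, for any $n\ge m$ the lattice $\BL{n}$ decomposes into the $2^{n-m}$ disjoint ``slices'' $\{A\cup B:A\subseteq[m]\}$ as $B$ ranges over subsets of $[n]\setminus[m]$, and each slice is isomorphic as a poset to $\BL{m}$, so a partition of $\BL{m}$ yields one of $\BL{n}$ slice by slice. This reduces the theorem to producing one value of $m$ that works.

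To construct a partition of $\BL{m}$, I would fix once and for all a chain decomposition $P=C_1\sqcup\cdots\sqcup C_s$ of $P$ (obtained e.g.\ from Dilworth's theorem) with $|C_j|=\ell_j$, together with an order-embedding $\iota:P\hookrightarrow\BL{N}$ for some $N$ depending only on $P$. The chains $\iota(C_1),\dots,\iota(C_s)\subseteq\BL{N}$, together with their mutual inclusion pattern, serve as a \emph{template} that each copy of $P$ inside $\BL{m}$ must realise. Lonc's theorem (in its chain-partition form) allows us to partition suitable sub-lattices of $\BL{m}$ into chains whose lengths match the $\ell_j$'s; the plan is then to organise these chains into bundles of size $s$, each bundle being one chain of each length $\ell_j$, so that the $s$ chains in a bundle fit together as a copy of $P$ whose internal inclusions agree with the template.

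The principal obstacle is unambiguously the final assembly step. Lonc's theorem guarantees chains of prescribed lengths but gives no control over their \emph{relative} positions, whereas a copy of $P$ demands a very specific web of inclusions among its $s$ constituent chains. To address this I would work inside a product structure $\BL{m}\cong\BL{N}\times\BL{m-N}$, using the $[N]$-coordinate to carry the skeleton $\iota(P)$ and the $[m-N]$-coordinate to supply the freedom needed for the partition; one would then try to match chain partitions across the two factors via a Hall-type argument, or iterate the construction to absorb the ``leftover'' part $(\BL{N}\setminus\iota(P))\times\BL{m-N}$. Making this fully work---so that every set of $\BL{m}$ lies in exactly one copy of $P$, simultaneously for every required chain length---is where the real content of the paper ought to lie, and would almost certainly require additional ideas beyond this initial sketch.
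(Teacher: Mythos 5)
Your opening reduction is correct: if $\BL{m}$ admits a partition into copies of $P$, then writing $\BL{n} \cong \BL{m} \times \BL{n-m}$ and slicing along the second factor carries this to a partition of $\BL{n}$ for every $n \ge m$, so it does suffice to find a single working dimension. But the gap you flag in the assembly step is not a technical loose end to be patched by a Hall-type argument---it is the entire problem, and the chain route does not supply the kind of control it needs. Lonc's theorem (even in Griggs's form) partitions $\BL{m}$ into chains of one prescribed length $c$ plus at most one leftover chain; it does not produce chains of several different lengths $\ell_1,\dotsc,\ell_s$ with prescribed multiplicities, and, more fatally, no chain-decomposition theorem for $\BL{m}$ offers any hold on which elements of \emph{distinct} chains are comparable. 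A copy of $P$ is not a bag of $s$ disjoint chains of the right sizes: every pair $(C_i, C_j)$ must realise a specific comparability pattern dictated by $P$, and the chains in a chain decomposition of $\BL{m}$ (symmetric or otherwise) are spread across levels in a way that resists being bundled into small, tightly interlocked subposets. So the proposal does not reach a proof, and I do not see how the route you sketch could be completed without an essentially new ingredient.

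The paper sidesteps assembly altogether by relaxing the problem. It first produces two \emph{weak} partitions of $\BL{n}$ by copies of $P$: an $r$-partition, where every element is covered exactly $r$ times (obtained via a symmetrisation/averaging argument over level sets, \Cref{lem:r-partition}), and a $(1 \bmod r)$-partition, where every element is covered $1$ mod $r$ times (obtained by a telescoping argument with signed weight functions, using that $|P|$ is a power of $2$, \Cref{lem:1-mod-r-partition}). Neither requires disjointness, which is why they are accessible. The heavy lifting is then done by the general product theorem \Cref{thm:general}, which converts the coexistence of these two weak partitions of $S = \BL{m}$ into an honest partition of $S^n = \BL{mn}$ for some $n$. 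That theorem, proved by an intricate inductive ``fill in the gaps'' scheme over $U^k$, is the replacement for the missing assembly step in your plan; nothing in the chain-based sketch plays a comparable role.
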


    The plan of the paper is as follows.  In \Cref{sec:overview} we give the
    most important definitions and outline the structure of the proof of
    \Cref{thm:main}. We give the actual proof in
    \Cref{sec:general,sec:particular}: \Cref{sec:general} contains a general
    argument, which works in various settings where a partition of a product set
    into smaller sets is sought, and might be of independent interest; 
    \Cref{sec:particular} contains ideas that are particular to
    partitioning $\BL{n}$ into copies of a fixed poset. Finally, in
    \Cref{sec:final} we give some open problems.

\section{Overview of the proof} \label{sec:overview}

\subsection{Weak partitions}

A key idea in the proof will be the interplay between partitions and two weaker
notions, called \rpart{r}s and \modpart{r}s, which we now describe.  This idea
appears (in a different context) in a paper by Gruslys, Leader and
Tan~\cite{Gruslys2016}.

Let $P$ be a poset. Recall that a set $A \subset \BL{n}$ is a \emph{\posetcopy}
of $P$ if the poset induced on $A$ by $\BL{n}$ is isomorphic to $P$.  We define
$\mathcal{F}_n(P)$ to be the family of all copies of $P$ in $\BL{n}$. 

Let $X$ be a set, and let $\mathcal{F}$ be any family of subsets of $X$. A
\emph{weight function} on $\mathcal{F}$ is an assignment of non-negative
integer weights to the members of $\mathcal{F}$. For an element $x \in X$, the
\emph{multiplicity} of $x$ for a weight function is the total weight of those
members of $\mathcal{F}$ that contain $x$. So, for example, $X$ can be
partitioned into members of $\mathcal{F}$ if and only if there exists a weight
function on $\mathcal{F}$ for which every element of $X$ has multiplicity $1$.
For a positive integer $r$, we say that

\begin{itemize}
    \item $\mathcal{F}$ \emph{contains an $r$-partition of} $X$ if there is a
        weight function on $\mathcal{F}$ for which every element of $X$ has
        multiplicity $r$\,;
    \item $\mathcal{F}$ \emph{contains a $(1\text{ mod }r)$-partition of} $X$ if
        there is a weight function on $\mathcal{F}$ for which every $x \in X$
        has multiplicity $1 + rk_x$, where $k_x \in \{0,1,\dotsc\}$ may depend on
        $x$.
\end{itemize}

Our strategy revolves around establishing a close relation between \rpart{r}s,
\modpart{r}s and actual partitions of sets. Obviously, if $\F$ contains a
partition of $X$, then $\F$ contains an \rpart{r} and a \modpart{r} of $X$ for
every $r$. Our aim is to go in the opposite direction.  Namely, our strategy
consists of two steps: firstly, we will show that if there exists an $r$ such
that $\F$ contains an \rpart{r} and a \modpart{r} of $X$, then we can use these
weak partitions to get an actual partition of $X^m$ for some $m$; secondly, we
will show that, for some $n$ and $r$, $\F_n(P)$ does contain an \rpart{r} and a
\modpart{r} of $\BL{n}$.

It is not immediately obvious that this strategy should work. For instance, it
is not clear that finding weak partitions of $\BL{n}$ is easier than finding an
actual partition. However, this will turn out to be the case in
\Cref{sec:particular}, where we prove the following lemmas.

\begin{restatable}{lem}{rpartition} \label{lem:r-partition}

    Let $P$ be a finite poset with a greatest and a least element. Then there
    exist positive integers $n$ and $r$ such that the family of copies of $P$ in
    $\BL{n}$ contains an $r$-partition of $\BL{n}$.

\end{restatable}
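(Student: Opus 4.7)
My plan is to use the $S_n$-symmetry of $\BL{n}$ to recast the $r$-partition question as a polynomial identity, and to solve that identity using a rich family of copies of $P$ in $\BL{n}$.

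\textbf{Step 1 (Symmetric reformulation).} Since the condition ``every element has multiplicity $r$'' is $S_n$-invariant, I would replace any candidate weight on $\F_n(P)$ by its $S_n$-average, reducing to the case of an $S_n$-invariant weight. Such a weight is constant on $S_n$-orbits of copies, each of which is specified by an embedding $\phi\colon P \to \BL{m_\phi}$ with $\phi(\hat{0}) = \emptyset$ and $\phi(\hat{1}) = [m_\phi]$, together with a shift $a \in \{0, \dots, n - m_\phi\}$ giving the size of the least element of the copy. Writing $\mu_\phi(x) = \sum_{y \in P} x^{|\phi(y)|}$ for the level polynomial of $\phi$, a short double-count converts the $r$-partition condition into the polynomial identity
\[
\sum_\phi c_\phi(x)\, \mu_\phi(x) \;=\; r(1+x)^n,
\]
where the $c_\phi$ are to be polynomials with non-negative integer coefficients of degree at most $n - m_\phi$, and the sum ranges over some finite collection of embeddings.

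\textbf{Step 2 (A flexible family of embeddings).} To solve this identity, I would first produce a rich supply of embeddings starting from a fixed base embedding $\phi_0\colon P \to \BL{m_0}$. The key construction is an \emph{up-set padding}: for any up-set $V \subseteq P$ and integer $k \ge 1$, define $\phi_0^{V,k}\colon P \to \BL{m_0 + k}$ by adjoining $\{m_0 + 1, \dots, m_0 + k\}$ to $\phi_0(y)$ exactly when $y \in V$. A short case analysis (using that $V$ is an up-set) confirms that $\phi_0^{V,k}$ is an embedding of $P$, and computes its level polynomial as
\[
\mu_{\phi_0^{V,k}}(x) \;=\; \mu_{\phi_0}(x) + (x^k - 1) \sum_{y \in V} x^{|\phi_0(y)|}.
\]
As $V$ varies over up-sets, M\"obius inversion over the distributive lattice of up-sets shows that the $\Q[x]$-ideal generated by these level polynomials contains both $\mu_{\phi_0}$ and $x - 1$; since $\mu_{\phi_0}(1) = |P| \neq 0$, these are coprime in $\Q[x]$, so the ideal is all of $\Q[x]$. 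In particular, B\'ezout's identity yields a rational representation $(1+x)^n = \sum_\phi a_\phi(x)\, \mu_\phi(x)$ for any $n$.

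\textbf{Step 3 (The main obstacle: non-negativity).} The $a_\phi$ produced by B\'ezout will typically have some negative coefficients, whereas I need non-negative integer $c_\phi$. To handle this, I would invoke a P\'olya-type theorem: any polynomial $p(x)$ that is strictly positive on $[0, \infty)$ has all non-negative coefficients in $(1+x)^N p(x)$ for $N$ sufficiently large. So if I can modify the B\'ezout representation to make each $a_\phi$ strictly positive on $[0, \infty)$, then multiplying through by $(1+x)^N$ and clearing denominators gives the required representation in $\BL{n+N}$, respecting the degree bounds since $\deg(a_\phi(1+x)^N) \le (n - m_\phi) + N = (n+N) - m_\phi$. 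The main obstacle is that simultaneous positivity is not automatic --- a small two-generator example shows it can fail even when $\gcd = 1$. However, I have substantial flexibility: the trivial syzygies $\mu_\psi \mathbf{e}_\phi - \mu_\phi \mathbf{e}_\psi = 0$ let me substitute $a_\phi \mapsto a_\phi + t\, \mu_\psi$, $a_\psi \mapsto a_\psi - t\, \mu_\phi$ for any polynomial $t$, and since each $\mu_\psi$ is itself strictly positive on $[0, \infty)$, adding large positive multiples of $\mu_\psi$ can push $a_\phi$ into positive territory. Executing this balancing act simultaneously for all $\phi$ in a sufficiently rich family of up-set padded embeddings is the crux of the proof, and the step I expect to demand the most care.
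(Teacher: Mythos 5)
Your symmetrization in Step 1 is exactly the paper's opening move, and your polynomial reformulation
\[
\sum_\phi c_\phi(x)\,\mu_\phi(x) \;=\; r(1+x)^n
\]
is a correct translation of the condition $N_w(L_k) = r\binom{n}{k}$ for all $k$ (provided one picks a single representative copy per level pattern, which is what makes the double-count clean). Step 2 also looks sound: the up-set paddings are genuine embeddings, and the ideal computation is right, since $\mu_{\phi_0}(0)=1$ and $\mu_{\phi_0}(1)=|P|$ make $\mu_{\phi_0}$ coprime to $(x^k-1)x^{m_0}$ in $\Q[x]$.

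The gap is Step 3, and you have flagged it yourself: you never actually establish that a representation with all $c_\phi$ having non-negative coefficients and the required degree bounds exists. B\'ezout gives you \emph{some} $a_\phi$; the P\'olya-type trick requires that you first make every $a_\phi$ strictly positive on $[0,\infty)$, and the syzygy moves you propose do not obviously achieve this simultaneously --- adding $t\mu_\psi$ to $a_\phi$ costs you $t\mu_\phi$ subtracted from $a_\psi$, and you have given no argument that these trades can be arranged to terminate with everything positive. You also need to control degrees: the $a_\phi$ from B\'ezout, and certainly after adding multiples of the $\mu_\psi$, may have degree exceeding $n-m_\phi$, and multiplying by $(1+x)^N$ does not repair an overshoot. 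So the proposal is a plan with a hole in its central step, not a proof.

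For contrast, the paper sidesteps the positivity problem entirely rather than attacking it algebraically. It first shows (\Cref{prop:embeddings}) that for some fixed $d$, \emph{every} $d$-scattered $|P|$-subset of $\{0,\dots,n\}$ is realized as the level set of some copy of $P$; this gives far more freedom than your up-set paddings, which only perturb one base embedding. It then proves a purely combinatorial lemma (\Cref{prop:inpowerset}) that any target function on $\{0,\dots,n\}$ with small enough maximum relative to its sum can be written as a non-negative combination of indicators of $t$-element sets, and applies it to the target $\binom{n}{k}$ with $t=|P|d$. Non-negativity is built into that lemma's greedy induction from the start, so there is no positivity obstacle to fight at the end. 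If you want to salvage your algebraic route, the missing ingredient is a Handelman/Positivstellensatz-style statement tailored to your family $\{\mu_\phi\}$, together with a degree-bookkeeping argument; both would need to be proved, and neither is routine.
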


\begin{restatable}{lem}{modrpartition} \label{lem:1-mod-r-partition}

    Let $P$ be a finite poset of size $2^k$ that has a greatest and a least
    element, and let $r$ be a positive integer.  Then there exists a positive
    integer $n$ such that the family of copies of $P$ in $\BL{n}$ contains a
    $(1\text{ mod }r)$-partition of $\BL{n}$.

\end{restatable}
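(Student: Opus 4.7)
The plan is to reduce the question modulo $r$ and then resolve the resulting additive problem via an explicit block-wise construction.

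First, the reduction. Suppose that for some $n$ the constant function $\mathbf{1}$ on $\BL{n}$ lies in the $\Z/r\Z$-submodule $M$ of $(\Z/r\Z)^{\BL{n}}$ generated by the indicator functions $\{1_C : C \text{ a copy of } P \text{ in } \BL{n}\}$. Then there exist integers $a_C \in \{0, 1, \dotsc, r-1\}$ with $\sum_C a_C \cdot 1_C \equiv \mathbf{1} \pmod{r}$. Since the $a_C$ are non-negative and at every $X$ the multiplicity $\sum_C a_C 1_C(X)$ is a non-negative integer congruent to $1 \pmod{r}$, it must lie in $\{1, r+1, 2r+1, \dotsc\}$, automatically giving a $(1\text{ mod }r)$-partition. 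Hence it suffices to show $\mathbf{1} \in M$ for some $n$.

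To realise $\mathbf{1}$ modulo $r$, I would fix an embedding $e : P \hookrightarrow \BL{N}$ with $e(\bot) = \emptyset$ and $e(\top) = [N]$ (which exists since $P$ has a least and greatest element), and let $T = e(P)$. For $n = Nm$, partition $[n]$ into $m$ blocks $B_1, \dotsc, B_m$ of size $N$; for each block $B_i$, each bijection $\phi : [N] \to B_i$, and each $A \subseteq [n] \setminus B_i$, the set $\{A \cup \phi(S) : S \in T\}$ is a copy of $P$ in $\BL{n}$. A short computation shows that summing indicator functions over all such copies (with weight $1$) gives multiplicity at $X$ equal to $\sum_{i=1}^m g(|X \cap B_i|)$, where $g(s) = |T_s| \cdot s! \cdot (N-s)!$ and $T_s = \{S \in T : |S| = s\}$. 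Using several embeddings $e^{(1)}, \dotsc, e^{(L)}$ with integer weights $w_l$, the combined per-block function becomes $G = \sum_l w_l g^{(l)}$, which we may tune modulo $r$.

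The heart of the argument is arranging $\sum_{i=1}^m G(|X \cap B_i|) \equiv 1 \pmod{r}$ for every $X$; the natural target is $G$ constant modulo $r$, say $G \equiv c \pmod r$ with $\gcd(c, r) = 1$, then choosing $m$ so that $mc \equiv 1 \pmod r$. Realising such a $G$ reduces to a linear algebra question modulo $r$ about the span of the rank profiles $s \mapsto |T^{(l)}_s|$ of copies of $P$ in $\BL{N}$. The hypothesis $|P| = 2^k$ enters decisively here: it fixes $\sum_s |T_s| = 2^k$ and pins down $|T_0| = |T_N| = 1$, both constraining and aiding the arithmetic modulo $r$. I expect this mod-$r$ linear algebra step to be the main obstacle, and if the symmetric embeddings alone fail to span $\mathbf{1}$ modulo $r$, then Lemma~\ref{lem:r-partition} can be folded in to supply a canonical family of copies with a known constant rank profile, completing the span and finishing the proof.
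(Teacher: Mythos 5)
Your outer strategy is sound: reducing to the assertion that the constant function $\mathbf{1}$ lies in the $\Z/r\Z$-span of the indicator functions of copies is correct, and so is the multiplicity computation. Indeed, for the block construction one gets multiplicity $\sum_{i=1}^m g\bigl(|X\cap B_i|\bigr)$ at $X$ with $g(s) = |T_s|\,s!\,(N-s)!$, and combining embeddings $e^{(1)},\dotsc,e^{(L)}$ with integer weights $w_l$ gives the per-block function $G(s) = s!(N-s)!\sum_l w_l |T^{(l)}_s|$.

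The gap is in the step you yourself flag as the ``main obstacle'', and it is not repairable within your framework. Taking $X = \emptyset$ forces $m\,G(0) \equiv 1 \pmod r$, and comparing with $X$ meeting exactly one block forces $G(s) \equiv G(0) \pmod r$ for all $s$; so your target ``$G$ constant with $\gcd(c,r)=1$'' is indeed necessary, not just convenient. But $T^{(l)}_0 = \{\emptyset\}$ for every admissible embedding (this is exactly the top/bottom normalisation), so
$$
G(0) = N!\sum_l w_l.
$$
Thus any prime $p$ dividing both $r$ and $N!$ --- i.e.\ any prime $p \le N$ dividing $r$ --- divides $G(0)$, making $\gcd(G(0),r) > 1$ and $m\,G(0) \equiv 1 \pmod r$ unsolvable. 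Since $N$ is bounded below by the embedding dimension of $P$ (so $N \ge 2$ once $|P| \ge 4$), this already kills the argument for every even $r$. The hypothesis $|P| = 2^k$ does not rescue you here: it tells you nothing about $\gcd(N!,r)$. Nor does folding in Lemma~\ref{lem:r-partition}: that lemma produces an $r'$-partition for some $r'$ of \emph{its} choosing, which contributes multiplicity $\equiv 0 \pmod{r'}$ everywhere and cannot adjust residues modulo the $r$ you were handed.

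The paper's proof avoids factorial obstructions entirely by working with $\Z$-valued weights and building the constant $1$ out of ``unit differences''. It shows that for any $x,y \in \BL{n}$ (with $n$ roughly twice the embedding dimension of $P$) the function $\ind{\{x\}}-\ind{\{y\}}$ is a $\Z$-combination of indicators of copies: one takes a copy with top element $x$, replaces $x$ by the top of $\BL{n}$ to get another copy, and subtracts; and symmetrically with least elements. Once all unit differences are realisable, a function is realisable iff some function of the same total sum is, and $|P|=2^k \mid 2^n$ provides one of total sum $2^n$, namely $2^{n-k}\ind{A}$ for a single copy $A$. This is where the power-of-two hypothesis enters, in a way quite different from what your sketch anticipates. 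If you want to salvage a block-style argument, you would need copies that are \emph{not} fully symmetric within a block --- essentially reinventing the paper's ``slide the top element'' trick --- so I would recommend abandoning the symmetrisation over all $N!$ bijections rather than trying to tune the factorials away.
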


A key part of the argument will be to see how to use these seemingly much weaker
results can be used to find an actual partition of $\BL{n}$. We will discuss
this in the following subsection.

\subsection{Product systems}

We will prove a very general theorem, which, applied to \Cref{lem:r-partition,%
lem:1-mod-r-partition}, will imply our main result. 

Let $S$ be a set. For two sets $A \subset S^m, B \subset S^n$ with $m \le n$, we
say that $B$ is a \emph{\clone{}} of $A$ if $B$ can be obtained by taking a
product of $A$ with a singleton set in $S^{n-m}$ and permuting the coordinates.
More precisely, for a permutation $\pi$ of $\{1, \dotsc, n\}$ and $x = (x_1,
\dotsc, x_n) \in S^n$, we define $\pi(x) = (x_{\pi(1)}, \dotsc, x_{\pi(n)})$.
Moreover, for any $X \subset S^n$, we define $\pi(X) = \{\pi(x) : x \in X\}$.
Finally, for any $X \subset S^m$ and $Y \subset S^{n-m}$, we define $X \times Y
= \{ (x_1, \dotsc, x_m, y_1, \dotsc, y_{n-m}) : (x_1, \dotsc, x_m) \in X, (y_1,
\dotsc, y_{n-m}) \in Y \}$. Note that we abuse the notation slightly and
identify $S^m \times S^{n-m}$ with $S^n$, which allows us to consider $X \times
Y$ as a subset of $S^n$. With these definitions, $B$ is a copy of $A$ if $B =
\pi(A \times \{y\})$ for some permutation $\pi$ of $\{1, \dotsc, n\}$ and some $y
\in S^{n-m}$.

Note that this definition does not exactly agree with the definition of a copy
of a poset, which we made in \Cref{sec:introduction}. Indeed, there may exist
two sets $A, B \subset \BL{n}$ such that $\BL{n}$ induces the same poset on $A$
and $B$, but such that $B$ cannot be obtained from $A$ by permuting the
coordinates. However, we think that this abuse of notation is not harmful,
because it will always be clear from the context which definition of a copy
should be used. Moreover, if sets $A \subset \BL{n}$ and $B \subset \BL{m}$ are
copies in the new sense, then they are also copies when considered as posets.
Therefore, the two definitions are in fact closely related.

The following theorem is vital for our strategy.

\begin{restatable}{thm}{general} \label{thm:general}

    Let $S$ be a finite set and let $\F$ be a family of subsets of $S$. Suppose
    that there exists a positive integer $r$ such that $\F$ contains an
    \rpart{r} and a \modpart{r} of $S$. Then there exists a positive integer $n$
    such that $S^n$ can be partitioned into copies of members of $\mathcal{F}$.

\end{restatable}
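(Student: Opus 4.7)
My plan is to reformulate the partition problem combinatorially and then build the partition iteratively from the two weak partitions. A weight function on copies of members of $\F$ in $S^n$ can be specified by assigning a weight function $\phi_L : \F \to \No$ to each axis-aligned line $L$ in $S^n$, and the multiplicity at a point $x \in S^n$ is then $\sum_{j=1}^n m_{\phi_{L_j(x)}}(x_j)$, where $L_j(x)$ is the line through $x$ in direction $j$ and $m_\phi(s) = \sum_{F \ni s} \phi(F)$. For a partition, this sum must equal $1$ at every $x$; since the summands are non-negative integers, at each point exactly one direction $j^\ast(x)$ contributes $1$ and the rest contribute $0$. The problem therefore reduces to finding a direction assignment $j^\ast : S^n \to [n]$ such that, for every line $L$ in direction $j$, the set of points on $L$ assigned to direction $j$ (viewed as a subset of $S$ via the $j$-th coordinate) is a disjoint union of members of $\F$, together with an explicit such decomposition for each $L$.

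Next, I would construct the direction assignment in stages, making essential use of both weak partitions. In the first stage, I apply the $(1 \bmod r)$-partition $w_2$ to all lines in direction $1$; this contributes multiplicity $1 + r k_{x_1}$ at each $x$ and already covers points whose first coordinate lies in the set $Z = \{s \in S : k_s = 0\}$ exactly once. The remaining points carry excess multiplicity $r k_{x_1}$, a multiple of $r$. The $r$-partition $w_1$, when applied to lines in other directions, contributes multiplicities that are themselves multiples of $r$, so combining the two across many coordinates provides the algebraic flexibility needed to drive the total multiplicity down to $1$. By iterating this balancing over the $n$ coordinates, I aim to produce line weight functions whose combined multiplicity is exactly $1$ everywhere.

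The main obstacle is the non-negativity of the weights: we cannot subtract, so every adjustment must be realised as a further non-negative weight function on lines. I expect to address this by working in a sufficiently high dimension $n$ so that enough axes are available to distribute the needed additions without any single line becoming overloaded. The hardest step will be showing that the recursive process closes: after handling points whose first coordinate lies in $Z$, the residual problem on $(S \sm Z) \times S^{n-1}$ must still admit a suitable weak-partition structure, so that an induction on $|S|$ or on a more refined complexity measure eventually terminates. Alternatively, a more global argument may be needed, directly exhibiting the required direction assignment via a combinatorial or linear-algebraic identity, where the role of the $r$-partition is to guarantee the divisibility conditions that make the construction feasible.
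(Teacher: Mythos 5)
Your reformulation in the first paragraph is sound: a partition of $S^n$ into copies of $\F$-members is indeed equivalent to a direction assignment $j^\ast : S^n \to [n]$ together with, for each line in direction $j$, a disjoint decomposition of the fibre $\{x_j : j^\ast(x)=j\} \subset S$ into $\F$-members. (The paper never states this explicitly, but it is the right abstract picture.) However, the construction you propose after that does not work, and the gap is not a technicality. In your first stage you apply the $(1 \bmod r)$-partition $w_2$ to every line in direction $1$, observe that points with $x_1 \in Z = \{s : k_s = 0\}$ get multiplicity exactly $1$, and then try to treat those points as already handled. But a weak partition is an overlapping cover: the $\F$-members of positive weight under $w_2$ that contain points of $Z$ generally also contain points of $S \setminus Z$, and they overlap with each other. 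So the restriction of $w_2$ to $Z$ is \emph{not} a disjoint decomposition of $Z$, and the direction-$1$ fibres of your assignment cannot actually be realised as disjoint unions of $\F$-members. Equivalently, in the weight-function picture: once $w_2$ is placed on every direction-$1$ line, any point with $x_1 \notin Z$ is already covered with multiplicity $>1$, and since weights are non-negative you can never reduce that excess by adding contributions in other directions. You flag this non-negativity obstacle yourself, but the proposed fix --- ``work in high enough dimension so that additions can be distributed'' --- does not address it, because distributing further \emph{additions} never cancels an overcount. The inductive scheme you sketch (recursing on $(S \setminus Z) \times S^{n-1}$) therefore never gets off the ground, and the ``more refined complexity measure'' is left entirely unspecified.

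What is genuinely missing is a mechanism for converting overlapping weak covers into exact disjoint covers, and this is where essentially all of the work in the paper's proof lives. The paper does not try to drive a global multiplicity function to $1$; instead it works with \emph{gaps}. For $U = A \cup B$ it partitions sets of the form $U^k \setminus Y$ into copies of $A$ and $B$, where $Y$ is a controlled union of ``corner'' sets $C_{i,k} = \bar A \times \cdots \times \bar B \times \cdots \times \bar A$. Propositions \ref{prop:blowup}--\ref{cor:multiplechanges} show that one corner gap can be moved, split, and relocated when passing from $U^k$ to $U^{k+1}$. Proposition~\ref{prop:fillin} then shows that if each member $P_i$ of the $r$-partition is placed over its own corner $C_{i,t}$, the remaining region of $S\times U^t$ decomposes cleanly. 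Proposition~\ref{prop:manychoices} combines these so that any admissible pattern of $t \equiv 1 \pmod r$ missing corners can be repaired, and only then is the $(1 \bmod r)$-partition invoked (in the proof of Lemma~\ref{lem:main}) to select, for each $y\in S$, which corners to delete. Finally Proposition~\ref{prop:buildbigger} and the reverse induction on $A_i = B_1\cup\cdots\cup B_i$ bootstrap Lemma~\ref{lem:main} up to a partition of $S^n$ into copies of $\F$-members alone. None of this geometric machinery appears in your outline, and without some substitute for it, the proposal cannot be completed.
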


This theorem was inspired by work of Gruslys, Leader and Tan~\cite{Gruslys2016}.
They implicitly used a special case of this theorem to prove that, for any
finite (non-empty) set $T \subset \mathbb{Z}^k$, there exists a positive integer
$n$ such that $\mathbb{Z}^n$ can be partitioned into isometric copies of $T$.

It is straightforward to deduce our main theorem from \Cref{lem:r-partition,%
lem:1-mod-r-partition,thm:general}. Indeed, let $P$ be a poset of size $2^k$
with a greatest and a least element. \Cref{lem:r-partition} implies that there
are positive integers $r$ and $u$ such that $\mathcal{F}_{u}(P)$ contains an
\rpart{r} of $\BL{u}$.  Now \Cref{lem:1-mod-r-partition} implies that there is a
positive integer $v$ such that $\mathcal{F}_{v}(P)$ contains a \modpart{r} of
$\BL{v}$. Setting $m = \max\{u, v\}$, $\mathcal{F}_m(P)$ contains both an
\rpart{r} and a \modpart{r} of $\BL{m}$.  We can now apply \Cref{thm:general}
with $\F = \F_m(P)$ and $S = \BL{m}$ to finish the proof.  (Note that if $B
\subset \BL{mn}$ is a \clone{} of some $A \in \F_m(P)$, then the poset that
$\BL{mn}$ induces on $B$ is isomorphic to $P$, and hence $B \in \F_{mn}(P)$.)

\section{Partitions in product systems} \label{sec:general}

Our aim in this section is to prove \Cref{thm:general}.

\general*

As in the statement of the theorem, we let $\F$ be a family of subsets of a
finite set $S$ and we suppose that $r$ is a natural number such that $\F$
contains an \rpart{r} and \modpart{r} of $S$. The set $S$, family $\F$ and
number $r$ will remain fixed throughout this section.

\begin{restatable}{lem}{main} \label{lem:main}

    For any sets $A, B \subset S$, there exists a positive integer $n$ such that
    $S^2 \times (A \cup B)^n$ can be partitioned into copies of members of
    $\mathcal{F} \cup \{A, B\}$.

\end{restatable}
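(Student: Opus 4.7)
Let $C = A \cup B$. In $S^2 \times C^n$ the admissible building blocks are of three types: copies of any $F \in \mathcal{F}$ placed along one of the two $S$-coordinates (where any $F \in \mathcal{F}$ is admissible since those coordinates range over all of $S$); copies of $A$ or $B$ placed along \emph{any} coordinate (both are admissible everywhere, since $A, B \subseteq C$); and copies of $F \in \mathcal{F}$ with $F \subseteq C$ placed along any of the $n$ $C$-coordinates. The two $S$-coordinates and the free parameter $n$ are present precisely to accommodate the two weak partitions $\alpha$ (the $r$-partition) and $\beta$ (the $(1\bmod r)$-partition) of $S$ that $\mathcal{F}$ supplies.

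The warm-up and natural base case is $A \cap B = \emptyset$. Here each fiber $\{(x_1, x_2, y_1, \ldots, y_{n-1})\} \times C$, obtained by letting one of the $C$-coordinates vary and fixing the rest, splits disjointly as a copy of $A$ together with a copy of $B$, and this yields the required partition of $S^2 \times C^n$ without any appeal to $\mathcal{F}$. For the overlapping case, the first move in my plan is to apply the $(1\bmod r)$-partition $\beta$ uniformly along the first $S$-coordinate: for every $F \in \mathcal{F}$ and every anchor $(x_2, y) \in S \times C^n$, place the copy $F \times \{(x_2, y)\}$ with weight $\beta_F$. The resulting weighted collection covers every $(x_1, x_2, y) \in S^2 \times C^n$ with multiplicity $1 + r k_{x_1}$, so every point is already covered at least once and the residual surplus at each point is $r k_{x_1}$, a non-negative multiple of $r$ depending only on the first coordinate, matching exactly the per-point multiplicity that the $r$-partition $\alpha$ produces.

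The bulk of the argument is then to cancel this surplus precisely, using further copies so that the total multiplicity at every point of $S^2 \times C^n$ becomes exactly $1$. Because $k_{x_1}$ depends on $x_1$, the cancellation cannot be carried out inside $S^2$ alone --- any weight scheme placed solely along the second $S$-coordinate fails to track the $x_1$-dependence. This is where the large $n$ enters essentially: one uses copies of $A$ and $B$ spread across many different $C$-coordinates, together with copies of $\mathcal{F}$-members contained in $C$ along those coordinates and further $\mathcal{F}$-copies along the second $S$-coordinate at selected anchors, to adjust each point's multiplicity by exactly the required amount. The freedom to take $n$ arbitrarily large provides the slack needed to engineer these adjustments without introducing new discrepancies.

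The main obstacle, which I expect to occupy the substance of the argument, is the combinatorial design of this cancellation. One must produce explicit integer weights on the various copies so that, added to the $\beta$-cover, every point of $S^2 \times C^n$ ends up with total multiplicity exactly $1$. I would expect the proof to proceed by an iterative peeling construction that removes one layer of $r$-surplus at a time --- using the $r$-partition $\alpha$ on the second $S$-coordinate as the source of ``units of $r$'' to absorb, and $A$- and $B$-copies along the $C$-coordinates as the carriers that route the required cancellations without creating imbalance elsewhere --- with $n$ chosen large enough to ensure that enough $C$-coordinates are available at each stage.
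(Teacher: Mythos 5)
There is a genuine gap at the heart of your plan: the ``cover and then cancel'' strategy cannot yield an actual partition. You propose placing the $(1\bmod r)$-partition $\beta$ as a cover along the first $S$-coordinate, giving every point multiplicity $1 + r k_{x_1} \geq 1$, and then using additional copies to ``cancel the surplus'' down to exactly $1$. But in the partition setting all weights are non-negative, so adding further copies can only \emph{increase} multiplicities; there is no mechanism to subtract. The only way to reduce multiplicity is to not place those copies in the first place, and then the problem is to partition what remains --- a fundamentally different task from cancellation. The $r$-partition $\alpha$, being a uniform $r$-fold cover of $S$, also cannot absorb a surplus that varies with $x_1$, so the proposed route really does stall here rather than merely being hard to execute.

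The paper avoids this by turning your covering picture inside out. Write $U = A \cup B$ and define for each $1 \le i \le d$ the ``corner'' $C_{i,d} = \bar A \times \cdots \times \bar B \times \cdots \times \bar A \subset U^d$ (with $\bar B$ in position $i$), plus $C_{0,d} = \bar A^d$. Rather than laying the $(1\bmod r)$-copies $R_1, \dots, R_k$ down as a cover, the paper \emph{carves out} the disjoint towers $R_i \times C_{i,n}$ (each trivially a union of copies of $R_i \in \F$) from $S \times U^n$ and partitions the complement $X$. Slicing $X$ by $y \in S$, the $y$-slice is $U^n$ with exactly the corners $\{C_{j,n} : y \in R_j\}$ removed, and crucially the number of removed corners is $|\{j : y \in R_j\}| \equiv 1 \pmod r$. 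The technical core you are missing is the gadget chain (Propositions~\ref{prop:blowup}--\ref{prop:modify} and Corollary~\ref{cor:multiplechanges}) showing that $U^\ell$ minus one corner can be partitioned into copies of $A$ and $B$, and that one can ``swap'' which corners are excluded when going up a dimension; combined with Proposition~\ref{prop:fillin} and the $r$-partition $\alpha$ on the other $S$-coordinate, this gives Proposition~\ref{prop:manychoices}, which handles exactly the sets $U^n \setminus \bigcup_{j} C_{j,n}$ with $\equiv 1 \pmod r$ many corners removed. Your high-level instincts --- both weak partitions enter, one per $S$-coordinate; $n$ large gives room; slice over $S$ --- are all on target, but without the corner device and the switch from cancellation to complementation, the argument does not close.
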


The proof of \Cref{lem:main} is by far the most complicated part of this paper.
We will prove \Cref{lem:main} in the next subsection. Now, with \Cref{lem:main}
at our disposal, we will prove \Cref{thm:general}.

\begin{prop} \label{prop:buildbigger}

    Let $A, B \subset S$ and suppose that there exist positive integers $p, q$
    such that
    \begin{itemize}
        \item $S^p$ can be partitioned into copies of members of $\F \cup
            \{A\}$, and
        \item $S^2 \times A^q$ can be partitioned into copies of members of $\F
            \cup \{B\}$.
    \end{itemize}
    Then $S^{pq+2}$ can be partitioned into copies of members of $\F \cup
    \{B\}$.

\end{prop}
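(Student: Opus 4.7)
The plan is to decompose $S^{pq+2} = S^2 \times (S^p)^q$ and apply the two given partitions hierarchically. First I would fix a partition of $S^p$ into copies $D_1, \dotsc, D_N$ of members of $\F \cup \{A\}$. Applying this partition independently to each of the $q$ factors $S^p$ partitions $(S^p)^q$ into products $D_{j_1} \times \dotsb \times D_{j_q}$, so $S^{pq+2}$ splits into blocks
\[
T_{j_1, \dotsc, j_q} \;=\; S^2 \times D_{j_1} \times \dotsb \times D_{j_q},
\]
and it suffices to partition each such block into copies of members of $\F \cup \{B\}$.

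Fix one block $T = T_{j_1, \dotsc, j_q}$ and split into two cases. If some factor $D_{j_i}$ is a copy of a member $F \in \F$, then for each fixed choice of the coordinates of $T$ lying outside the $i$-th $S^p$-block, the resulting slice through $T$ is a copy of $D_{j_i}$ and hence (a copy of a copy being a copy) a copy of $F$; as the fixed coordinate values vary, these slices partition $T$ into copies of $F \in \F \cup \{B\}$. Otherwise every $D_{j_i}$ is a copy of $A$, say $D_{j_i} = \pi_i(A \times \{y_i\})$ for some permutation $\pi_i$ of $[p]$ and some $y_i \in S^{p-1}$; a direct coordinate rearrangement then exhibits $T$ as a copy of $S^2 \times A^q$ (the permutation sends the $i$-th $A$-coordinate of $S^2 \times A^q$ to the unique $A$-coordinate inside the $i$-th $S^p$-block of $T$, and the remaining coordinates absorb the $y_i$). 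The assumed partition of $S^2 \times A^q$ into copies of members of $\F \cup \{B\}$ therefore transfers, via exactly the same permutation-and-singleton data, to a partition of $T$ of the required form.

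Concatenating the partitions produced in the two cases over all blocks yields the desired partition of $S^{pq+2}$ into copies of members of $\F \cup \{B\}$. There is no serious obstacle in this argument; the only point to be careful about is the bookkeeping verification that each intermediate operation used—restricting to a slice, multiplying by a singleton, composing two coordinate permutations—produces a copy in the precise sense defined in \Cref{sec:overview}. All of these are immediate from the definition, and the case split above exhausts all possible tuples $(j_1,\dotsc,j_q)$, so the proposition follows.
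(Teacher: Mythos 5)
Your proposal is correct and takes essentially the same approach as the paper: partition $S^p$, form the product blocks $S^2 \times D_{j_1} \times \dotsb \times D_{j_q}$, and split on whether every factor is a copy of $A$ (giving a copy of $S^2 \times A^q$) or some factor is a copy of a member of $\F$ (giving a union of copies of that member). The only difference is that you spell out in more detail the verifications the paper dismisses as obvious.
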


\begin{proof}
    Partition $S^p$ into sets $X_1, \dotsc, X_u, Y_1, \dotsc, Y_v$, where every
    $X_i$ is a copy of $A$ and every $Y_j$ is a copy of a member of $\F$.  We
    denote $\mathcal{X} = \{X_1, \dotsc, X_u\}$ and $\mathcal{Y} = \{Y_1,
    \dotsc, Y_v\}$. Then $S^{pq+2} = S^2 \times (S^p)^q$ is the disjoint union of
    sets $S^2 \times Z_1 \times \dotsb \times Z_q$ with $Z_i \in \mathcal{X} \cup
    \mathcal{Y}$ for all $i$. We separate these sets into two families, namely,
    \begin{align*}
        \mathcal{A} &= \big\{ S^2 \times Z_1 \times \dotsb \times Z_q : Z_i \in
            \mathcal{X} \text{ for all } i \big\}, \\
        \mathcal{B} &= \big\{ S^2 \times Z_1 \times \dotsb \times Z_q : Z_i \in
            \mathcal{X} \cup \mathcal{Y} \text{ for all } i
            \text{ and } Z_j \in \mathcal{Y} \text{ for some } j \big\}.
    \end{align*}
    Each member of $\mathcal{A}$ is a copy of $S^2 \times A^q$, so it can be
    partitioned into copies of members of $\F \cup \{B\}$. Moreover, each member
    of $\mathcal{B}$ can be partitioned into copies of some member of $\F$ in an
    obvious way. Since together these sets form a partition of $S^{pq+2}$, we
    are done.
\end{proof}

\begin{proof}[Proof of \Cref{thm:general} (assuming \Cref{lem:main})]
    Since $\mathcal{F}$ contains an $r$-partition of $S$ with $r \ge 1$, and
    since $S$ is finite, we can find finitely many sets $B_1, \dotsc, B_k \in
    \F$ that cover $S$. We define $A_i = B_1 \cup \dotsb \cup B_i$ for every $1
    \le i \le k$.  So, in particular, $A_k = S$.

    We will use reverse induction on $i$ to prove that there exist positive
    integers $p_1, \dotsc, p_k$ such that, for every $1 \le i \le k$, $S^{p_i}$
    can be partitioned into copies of members of $\F \cup \{A_i\}$. If $i = k$,
    then $A_k = S$, and the statement is trivially true with, say, $p_k = 1$.
    So we may assume that $1 \le i \le k-1$. Since $A_{i+1} = A_i \cup B_{i+1}$,
    it follows from \Cref{lem:main} that there exists a positive integer $q$
    such that $S \times (A_{i+1})^q$ can be partitioned into copies of members
    of $\F \cup \{A_i, B_{i+1}\}$. However, $B_{i+1}$ is a member of $\F$, so
    $\F \cup \{A_i, B_{i+1}\} = \F \cup \{A_i\}$. Combining this with the
    induction hypothesis for $i+1$ and \Cref{prop:buildbigger}, we see that
    $S^{p_i}$, where $p_i = p_{i+1}q+2$, can be partitioned into copies of
    members of $\F \cup \{A_i\}$.

    In particular, the statement holds for $i = 1$. Since $A_1 = B_1 \in \F$, it
    says that $S^{p_1}$ can be partitioned into copies of members of $\F$, as
    required.
\end{proof}

\subsection{Proof of \Cref{lem:main}}

Here we will prove \Cref{lem:main}.

\main*

We start by picking two sets $A, B \subset S$; these sets will be fixed
throughout the subsection. We define $U = A \cup B$, $\Ac = U \sm A$ and $\Bc =
U \sm B$. Moreover, for any integers $1 \le i \le d$, we define

$$
    C_{i,d} =%
        \underbrace{%
            \Ac \times \dotsb \times \Ac \times
            \overset{%
                \overset{%
                    \mathclap{i\text{-th component}}
                }{%
                    \downarrow
                }
            }{%
                \Bc 
            }
                \times \Ac \times \dotsb \times \Ac
        }_{d \text{ components}}.
$$
We also define $C_{0,d} = \Ac^d$. Our aim is to prove that there exists a
positive integer $n$ such that $S \times U^n$ can be partitioned into copies
of members of $\F \cup \{A, B\}$.

At certain points in the proof we will be conjuring up extra elbow space by
`blowing up' $S^k$, for some $k$, into $S^{k+1}$. It turns out that sometimes a
set $X \subset S^k$ can be usefully identified with a larger set $X \times \Ac
\subset S^{k+1}$. The following simple proposition is an example of this idea.

\begin{prop} \label{prop:blowup}
    
    Let $k \ge 1$ and let $X \subset U^k$ be such that $U^k \sm X$ can be
    partitioned into copies of $A$ and $B$. Then $U^{k+1} \sm \left( X \times
    \Ac \right)$ can be partitioned into copies of $A$ and $B$.

\end{prop}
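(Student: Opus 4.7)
The plan is to slice $U^{k+1}$ by the value of the last coordinate. Since $\Ac = U \sm A$, we have a disjoint decomposition $U = A \sqcup \Ac$, so
\[
    U^{k+1} = (U^k \times A) \sqcup (U^k \times \Ac),
\]
and the punctured set $X \times \Ac$ lies entirely in the second slab while the first slab is retained intact. This reduces the problem to partitioning each of these two slabs (with the puncture removed from the second) into \clone{}s of $A$ and $B$.

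For $U^k \times A$ I would simply slice by the first $k$ coordinates: for each $u \in U^k$ the fibre $\{u\} \times A$ is, by the product-system definition of \clone{} from the start of this section, a \clone{} of $A$, and these fibres partition $U^k \times A$. For the second slab, after removing the puncture we are left with $(U^k \sm X) \times \Ac$, and the hypothesis furnishes a partition $U^k \sm X = Y_1 \sqcup Y_2 \sqcup \dotsb$ with each $Y_i$ a \clone{} of $A$ or $B$. I would then slice each product $Y_i \times \Ac$ by the last coordinate: for each $v \in \Ac$, the set $Y_i \times \{v\}$ is a singleton product with $Y_i$ and thus a \clone{} of $Y_i$, hence a \clone{} of $A$ or $B$.

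Assembling these two partitions gives the required decomposition of $U^{k+1} \sm (X \times \Ac)$. There is no serious obstacle here: the argument is pure bookkeeping, relying only on the disjointness $A \cap \Ac = \es$ (which cleanly isolates the puncture in one slab) and on the trivial observation that taking a singleton product with a \clone{} of $A$ or $B$ produces another \clone{} of $A$ or $B$.
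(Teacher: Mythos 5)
Your proposal is correct and follows the same decomposition as the paper: split $U^{k+1} \sm (X \times \Ac)$ into $U^k \times A$ (partitioned into fibre copies of $A$) and $(U^k \sm X) \times \Ac$ (partitioned by taking copies of $U^k \sm X$ along the last coordinate and applying the hypothesis).
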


\begin{proof}
    Partition $U^{k+1} \sm \left( X \times \Ac \right)$ into sets $\left( U^k
    \sm X \right) \times \Ac$ and $U^k \times A$; the first of these sets can be
    partitioned into copies of $U^k \sm X$, and the second -- into copies of
    $A$.
\end{proof}

If we could prove that $U^k$, for some $k$, can be partitioned into copies of
$A$ and $B$ (that is, without using $\F$), then we would be done. Of course,
this is not possible in general. However, we can partition $U^k$ with one
$C_{i,k}$ removed.

\begin{prop} \label{prop:onecorner}

    For any integers $k \ge 1$ and $0 \le i \le k$, the set $U^k \sm C_{i,k}$
    can be partitioned into copies of $A$ and $B$.

\end{prop}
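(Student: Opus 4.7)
The plan is to induct on $k$, handling the case $i = 0$ first and then reducing the case $i \geq 1$ to it. The base case $k = 1$ is immediate: $U \sm \Ac = A$ is a single copy of $A$, and $U \sm \Bc = B$ is a single copy of $B$.

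For the inductive step with $i = 0$, I would split off the first coordinate to obtain
\[
    U^k \sm \Ac^k \;=\; \bigl( A \times U^{k-1} \bigr) \;\sqcup\; \bigl( \Ac \times (U^{k-1} \sm \Ac^{k-1}) \bigr).
\]
The first summand tiles into $|U|^{k-1}$ copies of $A$, namely $A \times \{y\}$ for each $y \in U^{k-1}$. For the second summand, the inductive hypothesis (applied with $i=0$ at parameter $k-1$) partitions $U^{k-1} \sm \Ac^{k-1}$ into copies of $A$ and $B$; prepending each element of $\Ac$ as a fixed extra first coordinate converts every such copy into a copy in $U^k$, and ranging over all elements of $\Ac$ yields a partition of $\Ac \times (U^{k-1} \sm \Ac^{k-1})$.

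For the case $1 \leq i \leq k$, I would first note that permuting coordinates of $U^k$ sends copies to copies, so without loss of generality $i = 1$ and $C_{1,k} = \Bc \times \Ac^{k-1}$. An analogous split of the first coordinate gives
\[
    U^k \sm C_{1,k} \;=\; \bigl( B \times U^{k-1} \bigr) \;\sqcup\; \bigl( \Bc \times (U^{k-1} \sm \Ac^{k-1}) \bigr),
\]
where the first piece tiles directly into copies of $B$ and the second piece is handled by the already-settled $i = 0$ case at parameter $k-1$, with the first coordinate now prepended from $\Bc$.

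Since each step is just a product decomposition followed by an application of the inductive hypothesis, there is no serious obstacle here; the only thing to watch is that the formal definition of a \emph{copy} involves a permutation of all coordinates together with a singleton factor, so one must verify that extending the ambient dimension by a fixed singleton coordinate preserves copy-hood, which is immediate from the definition.
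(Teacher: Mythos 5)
Your proof is correct and takes essentially the same route as the paper's: both proceed by induction on $k$, using a WLOG permutation to fix the special coordinate and then splitting off one coordinate of $U^k$ to reduce to the $(k-1)$-dimensional case. The paper packages the one-coordinate split as Proposition~\ref{prop:blowup} and applies it uniformly to all $i \neq k$, whereas you inline the decomposition and split the argument into an $i=0$ case (inducting directly) and an $i\geq 1$ case (reducing to $i=0$); this is a minor organizational difference only.
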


\begin{proof}
    We use induction on $k$. If $k = 1$, then, depending on the value of $i$,
    $U \sm C_{i,1}$ is either $A$ or $B$. If $k \ge 2$, we may assume that $i
    \neq k$ (in fact, there are only two distinct cases: $i=0$ and $i \neq 0$).
    By the induction hypothesis, $U^{k-1} \sm C_{i,k-1}$ can be partitioned into
    copies of $A$ and $B$. However, $C_{i,k} = C_{i,k-1} \times \Ac$, so we are
    done by \Cref{prop:blowup}.
\end{proof}

\Cref{prop:blowup} says that if we can partition a subset of $U^k$, then we can
also partition an `equivalent' subset of $U^{k+1}$. The following proposition
allows us to use the extra space in $U^{k+1}$ to slightly modify this subset.

\begin{prop} \label{prop:modify}

    Let $X \subset U^k$ be such that $U^k \sm X$ can be partitioned into copies
    of $A$ and $B$. Suppose that $X$ contains the set $C_{i,k}$ for some $0 \le
    i \le k$. Then the set $U^{k+1} \setminus Y$, where
    $$
        Y = \left( X \times \Ac \right) \cup C_{k+1,k+1} \sm C_{i,k+1},
    $$
    can also be partitioned into copies of $A$ and $B$.

\end{prop}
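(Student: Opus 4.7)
My plan is to decompose $U^{k+1} \setminus Y$ by the value of the $(k+1)$-th coordinate and then build the partition piece by piece. A direct unpacking of the definition of $Y$ shows that $(x_1, \dots, x_{k+1}) \in U^{k+1} \setminus Y$ exactly when either $x_{k+1} \in A \cap B$ (no further restriction), or $x_{k+1} \in \Bc$ and $(x_1, \dots, x_k) \notin \Ac^k$, or $x_{k+1} \in \Ac$ and $(x_1, \dots, x_k) \in (U^k \setminus X) \cup C_{i,k}$. Thus
\[
U^{k+1} \setminus Y = \bigl[U^k \times (A \cap B)\bigr] \sqcup \bigl[(U^k \setminus \Ac^k) \times \Bc\bigr] \sqcup \bigl[((U^k \setminus X) \cup C_{i,k}) \times \Ac\bigr],
\]
and this three-piece structure serves as the scaffolding for the partition.

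The next step is to construct the partition by combining fibres across these slabs. For each $d \in U^k \setminus \Ac^k$, the fibres $\{d\} \times (A \cap B)$ and $\{d\} \times \Bc$ fit together into $\{d\} \times A$, a copy of $A$ placed along coordinate $k+1$; these account for all of $(U^k \setminus \Ac^k) \times A$. For each $d \in \Ac^k \setminus X$, the leftover fibre $\{d\} \times (A \cap B)$ combines with $\{d\} \times \Ac$ drawn from the third slab into $\{d\} \times B$, a copy of $B$ along coordinate $k+1$ (this step is vacuous when $i = 0$, because then $\Ac^k = C_{0,k} \subseteq X$, whereas for $i \ge 1$ the disjointness $\Ac^k \cap C_{i,k} = \emptyset$ guarantees $d \in \Ac^k \setminus X$ really does lie in the third slab). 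The bulk of what remains of the third slab, namely $((U^k \setminus X) \setminus \Ac^k) \times \Ac$, can be handled by extending the given partition $\mathcal{P}$ of $U^k \setminus X$ via $D \mapsto D \times \{a\}$ for $D \in \mathcal{P}$, $a \in \Ac$. When $i = 0$ this completes the job, since $C_{0,k} \times \Ac = \Ac^k \times \Ac$ merges with $\Ac^k \times (A \cap B)$ into $\Ac^k \times B$, a union of copies of $B$.

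The case $i \ge 1$ is the hard one: the pieces $C_{i,k} \times \Ac$ and $(\Ac^k \cap X) \times (A \cap B)$ still need to be absorbed, and neither admits a copy of $A$ or $B$ placed along coordinate $k+1$. The key observation is that one can use copies placed along coordinate $i$ instead: for each $\bar d \in \Ac^{k-1}$ (indexing the positions other than $i$) and each $e \in U$, the set $\{(\bar d_1, \dots, \bar d_{i-1}, a, \bar d_{i+1}, \dots, \bar d_k, e) : a \in A\}$ is a copy of $A$ along coordinate $i$, and as $a$ ranges over $A = \Bc \cup (A \cap B)$ this sweeps through both $C_{i,k}$ and $D_* := \Ac^{i-1} \times (A \cap B) \times \Ac^{k-i}$; analogous copies of $B$ along coordinate $i$ sweep through $\Ac^k$ and $D_*$. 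The technically involved part, and what I expect to be the main obstacle, is reconciling these non-standard copies with the $\mathcal{P}$-extension and with the $\{d\} \times A$ copies introduced earlier: certain of the original copies must be dropped and replaced by matched collections of coordinate-$i$ copies so that the resulting family is still a partition, with careful case analysis to handle the interactions of $D_*$ with $X$ and the $\mathcal{P}$-copies that cross between $\Ac^k$ and its complement.
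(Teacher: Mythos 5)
Your opening three-slab decomposition of $U^{k+1}\setminus Y$ and your handling of the $i=0$ case are both correct. For $i\ge 1$, however, there is a genuine gap, which you flag at the end but do not close. The conflict you notice is real and fatal as written: if $D\in\mathcal{P}$ is a copy of $B$ along some coordinate $j$ whose $k-1$ fixed coordinates all lie in $\Ac$, then $D$ meets $\Ac^k$ (in its $\Ac$-part, of size $|\Ac|$) and also meets $U^k\setminus\Ac^k$ (in its $A\cap B$-part). For such a $D$ and $a\in\Ac$, the set $D\times\{a\}$ neither lies inside $\bigl((U^k\setminus X)\setminus\Ac^k\bigr)\times\Ac$ nor avoids the $\{d\}\times B$ copies you already placed over $\Ac^k\setminus X$. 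Your coordinate-$i$ copies compound rather than repair this: as the free coordinate ranges over $A$ or $B$ they sweep through $D_*\times\{e\}$ and $\Ac^k\times\{e\}$ on slices $e\in A\cap B$ that may already be covered, and you have no control over $D_*\cap X$. Saying that ``certain of the original copies must be dropped and replaced by matched collections'' is not a step in a proof; it is a restatement of the difficulty. As it stands the construction is not a partition, so the proposition is not proved.

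The paper sidesteps the reconciliation entirely by refining your three slabs into four pieces it can partition independently:
$Z_1=(U^k\setminus C_{0,k})\times\Bc$, $Z_2=(U^k\setminus C_{i,k})\times(A\cap B)$, $Z_3=C_{i,k}\times B$, $Z_4=(U^k\setminus X)\times\Ac$. The idea you are missing is to apply \Cref{prop:onecorner} to the one-corner-removed cubes $U^k\setminus C_{0,k}$ and $U^k\setminus C_{i,k}$, which are partitionable for free and independently of $X$, instead of trying to reuse $\mathcal{P}$ fibre-by-fibre across several slabs. Then $\mathcal{P}$ only ever enters through $Z_4$, a clean product over the $\Ac$ slab where it fits exactly, and all the trouble you run into at $\Ac^k$ and around $C_{i,k}$ is absorbed into $Z_1,Z_2,Z_3$, each an obvious disjoint union of copies.
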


\begin{proof}
    Partition $U^{k+1} \setminus Y$ into four sets $Z_1, Z_2, Z_3, Z_4$, where
    \begin{align*}
        Z_1 &= (U^k \sm C_{0,k}) \times \Bc, \\
        Z_2 &= (U^k \sm C_{i,k}) \times (A \cap B), \\
        Z_3 &= C_{i,k} \times B, \\
        Z_4 &= (U^k \sm X) \times \Ac.
    \end{align*}
    It is evident from \Cref{fig:modify} that these four sets do partition
    $U^{k+1} \setminus Y$.
    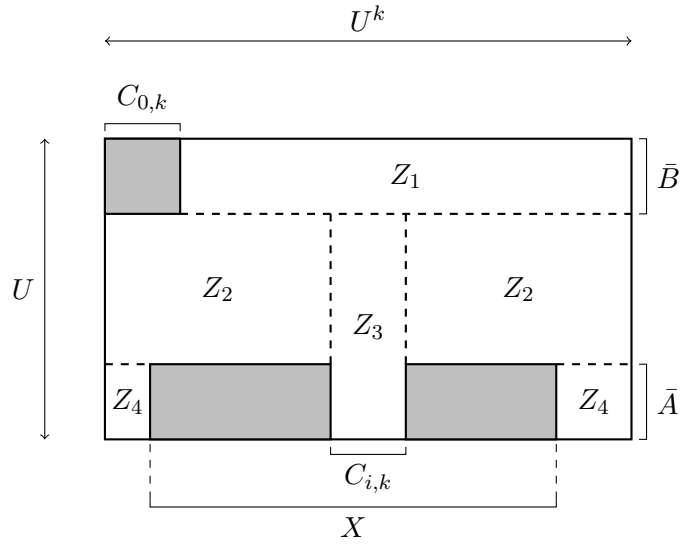
\begin{figure}[ht]
        \begin{centering}
            \begin{tikzpicture}
                \draw[thick] (0,0) -- (7,0) -- (7,4) -- (0,4) -- cycle;
                \draw[thick,dashed] (0,1) -- (0.6,1);
                \draw[thick,dashed] (6,1) -- (7,1);
                \draw[thick,dashed] (1,3) -- (7,3);
                \draw[thick,dashed] (3,1) -- (3,3);
                \draw[thick,dashed] (4,1) -- (4,3);
                \draw[thick,fill=lightgray] (0.6,0) -- (3,0) -- (3,1) -- (0.6,1) -- cycle;
                \draw[thick,fill=lightgray] (4,0) -- (6,0) -- (6,1) -- (4,1) -- cycle;
                \draw[thick,fill=lightgray] (0,3) -- (1,3) -- (1,4) -- (0,4) -- cycle;
                \draw (4, 3.5) node {$Z_1$};
                \draw (1.5, 2) node {$Z_2$};
                \draw (5.5, 2) node {$Z_2$};
                \draw (3.5, 1.5) node {$Z_3$};
                \draw (0.3, 0.5) node {$Z_4$};
                \draw (6.5, 0.5) node {$Z_4$};
                \draw (0, 4.1) -- (0,4.2) -- node[above]{$C_{0,k}$} (1,4.2) -- (1,4.1);
                \draw (3, -0.1) -- (3, -0.2) -- node[below]{$C_{i,k}$} (4, -0.2) -- (4, -0.1);
                \draw (0.6, -0.8) -- (0.6, -0.9) -- node[below]{$X$} (6, -0.9) -- (6, -0.8);
                \draw[dashed] (0.6, -0.8) -- (0.6, 0);
                \draw[dashed] (6, -0.8) -- (6, 0);
                \draw (7.1,0) -- (7.2,0) -- node[right]{$\Ac$}(7.2, 1) -- (7.1, 1);
                \draw (7.1,3) -- (7.2,3) -- node[right]{$\Bc$}(7.2, 4) -- (7.1, 4);
                \draw[<->] (0,5.3) -- node[above]{$U^k$} (7,5.3);
                \draw[<->] (-0.8,0) -- node[left] {$U$} (-0.8,4);
            \end{tikzpicture}
            \caption{The set $Y \subset U^{k+1}$ is shaded. The four sets $Z_1,
            Z_2, Z_3, Z_4$ partition $U^{k+1} \sm Y$.}
            \label{fig:modify}
        \end{centering}
    \end{figure}
    The sets $Z_1$ and $Z_2$ can be partitioned into copies of $A$ and $B$ by
    \Cref{prop:onecorner}. The set $Z_3$ is obviously a union of disjoint copies
    of $B$. Finally, $Z_4$ is a union of disjoint copies of $U^k \setminus X$,
    so it can be partitioned into copies of $A$ and $B$ by the assumption on
    $X$.
\end{proof}

The previous proposition enables us to make one change to the set $X$ when we go
one dimension up, that is, from $U^k$ to $U^{k+1}$. To make multiple changes, we
apply this proposition multiple times. This is exactly the content of
\Cref{cor:multiplechanges}.

\begin{cor} \label{cor:multiplechanges}
    
    Let $k,l$ be non-negative integers and let $I \subset \{0, \dotsc, k\}, J
    \subset \{k+1, \dotsc, k+l\}$ be sets such that $|J| = |I|$. Then the set
    $U^{k+l} \sm Y$, where
    $$
        Y = \left( U^k \times \Ac^l \right)
          \cup \left( \bigcup_{j \in J} C_{j,k+l} \right)
          \sm \left( \bigcup_{i \in I} C_{i,k+l} \right),
    $$
    can be partitioned into copies of $A$ and $B$.
\end{cor}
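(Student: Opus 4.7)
The plan is to prove the corollary by induction on $s = |I| = |J|$, peeling off one pair $(i^*, j^*)$ at a time via a single application of \Cref{prop:modify}. For the base case $s = 0$, the set $Y$ reduces to $U^k \times \Ac^l$, so $U^{k+l} \setminus Y = U^k \times (U^l \setminus \Ac^l) = U^k \times (U^l \setminus C_{0, l})$. By \Cref{prop:onecorner}, $U^l \setminus C_{0, l}$ partitions into copies of $A$ and $B$; placing each such copy in coordinates $k+1, \dots, k+l$ (and letting the first $k$ coordinates range freely in $U$) delivers the required partition of $U^{k+l} \setminus Y$.

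For the inductive step ($s \geq 1$), I first observe that the statement is invariant under permutations of the coordinates $k+1, \dots, k+l$: any such permutation $\sigma$ fixes $U^k \times \Ac^l$ and every $C_{i, k+l}$ with $i \in I \subset \{0, \dots, k\}$, but sends $C_{j, k+l}$ to $C_{\sigma(j), k+l}$, so $\sigma(Y) = Y_{I, \sigma(J)}$; and $\sigma$ maps any partition into copies of $A$ and $B$ to another such partition. Hence I may assume without loss of generality that $k + l \in J$, and then choose $j^* := k + l$ together with any $i^* \in I$. Setting $I' = I \setminus \{i^*\}$ and $J' = J \setminus \{j^*\} \subset \{k+1, \dots, k+l-1\}$, the induction hypothesis applied at $(k, l-1, I', J')$ furnishes a partition of $U^{k+l-1} \setminus Y'$, where $Y'$ is the obvious analog of $Y$ one dimension lower. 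Applying \Cref{prop:modify} with $X := Y'$ and $i := i^*$ then lifts this to a partition of $U^{k+l} \setminus Y''$, where $Y'' = (Y' \times \Ac) \cup C_{k+l, k+l} \setminus C_{i^*, k+l}$, and a short set-theoretic computation identifies $Y''$ with $Y$ (the added corner $C_{k+l, k+l}$ supplies the $j^*$ term of $\bigcup_{j \in J} C_{j, k+l}$).

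The main subtlety lies in the bookkeeping: one must check both that $C_{i^*, k+l-1} \subset Y'$ (the hypothesis of \Cref{prop:modify}) and that the set operations forming $Y''$ yield precisely $Y$. Both points reduce to the repeatedly used fact that any two distinct corners $C_{i, d}$ and $C_{i', d}$ are disjoint, which in turn follows from $\Ac \cap \Bc = \emptyset$ (a consequence of $U = A \cup B$). Concretely, $C_{i^*, k+l-1}$ sits inside $U^k \times \Ac^{l-1}$ (since $i^* \leq k$) and is disjoint from each $C_{i, k+l-1}$ with $i \in I'$, so it survives the subtractions making up $Y'$; and when forming $Y''$, the subtraction of $C_{i^*, k+l}$ does not disturb the new corner $C_{k+l, k+l}$ or any previously added $C_{j, k+l}$ with $j \in J'$, so $Y''$ matches $Y$ term-by-term.
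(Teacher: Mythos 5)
Your proof is correct, and its core mechanism is the same as the paper's: peel off one pair $(i^*, j^*)$ by a single application of \Cref{prop:modify}, reducing both the dimension and the number of exchanged corners by one. The packaging differs in two respects. First, you induct on $s = |I| = |J|$ rather than on $l$; this changes the base case from $l = 0$ (where $U^{k+l} \setminus Y = \emptyset$ trivially) to $s = 0$ (where $Y = U^k \times \Ac^l$), and you handle the latter by invoking \Cref{prop:onecorner} on the last $l$ coordinates. Second, and more substantively, you replace the paper's two-case split on whether $k+l \in J$ by a symmetry reduction: permuting the coordinates $k+1, \dotsc, k+l$ preserves $U^k \times \Ac^l$ and each $C_{i,k+l}$ with $i \le k$, and simply relabels $J$, so one may assume $k+l \in J$. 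This lets you bypass \Cref{prop:blowup} entirely, which the paper uses to handle the $k+l \notin J$ case. The net effect is a slightly cleaner induction at the cost of an extra (straightforward) symmetry observation; both are valid, and yours also correctly identifies the key disjointness fact $\Ac \cap \Bc = \emptyset$ that justifies the set-theoretic bookkeeping (that $C_{i^*,k+l-1} \subset Y'$, and that the corner added and the corner subtracted by \Cref{prop:modify} do not collide with the others). One cosmetic slip: a permutation $\sigma$ sends $C_{j,k+l}$ to $C_{\sigma^{-1}(j),k+l}$ under the paper's convention $\pi(x) = (x_{\pi(1)}, \dotsc, x_{\pi(n)})$, not to $C_{\sigma(j),k+l}$, but this does not affect the argument.
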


\begin{proof}
    We shall apply induction on $l$. If $l=0$, then $|J|=|I|=0$, so $U^k \sm Y =
    \es$, and hence the conclusion trivially holds.

    Now suppose that $l \ge 1$. We will split the argument into two cases,
    depending on whether or not $k+l \in J$.  If $k+l \in J$, then we write
    $j^\ast = k+l$ and we pick any $i^\ast \in I$.  We define $I^\ast = I \sm
    \{i^\ast\}$ and $J^\ast = J \sm \{j^\ast\}$. Finally, we define
    $$
        Y^\ast = \left( U^k \times \Ac^{l-1} \right)
          \cup \left( \bigcup_{j \in J^\ast} C_{j,k+l-1} \right)
          \sm \left( \bigcup_{i \in I^\ast} C_{i,k+l-1} \right).
    $$
    By the induction hypothesis, $U^{k+l-1}$ can be partitioned into copies of
    $A$ and $B$. Moreover, $Y = \left( Y^\ast \times \Ac \right) \cup
    C_{k+l,k+l} \sm C_{i^\ast,k+l}$, so we can apply \Cref{prop:modify} to
    finish the proof in this case.

    On the other hand, if $k+l \not\in J$, then we define
    $$
        Y^\prime = \left( U^k \times \Ac^{l-1} \right)
          \cup \left( \bigcup_{j \in J} C_{j,k+l-1} \right)
          \sm \left( \bigcup_{i \in I} C_{i,k+l-1} \right)
    $$
    and observe that $Y = Y^\prime \times \Ac$. Moreover, $U^{k+l-1} \sm
    Y^\prime$ can be partitioned into copies of $A$ and $B$ by the induction
    hypothesis, and hence it follows from \Cref{prop:blowup} that the same holds
    for $U^{k+l} \sm Y$.
\end{proof}

Recall that our ultimate goal in this subsection is to partition $S^2 \times
U^n$, for some $n \ge 1$, into copies of members of $\mathcal{F} \cup \{A, B\}$.
We cannot achieve this goal just yet, but we have already provided ourselves
with tools, in the form of
\Cref{prop:blowup,prop:onecorner,prop:modify,cor:multiplechanges}, that allow us
to partition $U^k \setminus X$, for various $k$ and various sets $X$, into
copies of $A$ and $B$. Our strategy now can be roughly described as follows. We
will take a large $n$ and we will slice $S^2 \times U^n$ up into copies of $S
\times U^n$.  We will partition big parts of these slices into copies of members
of $\F \cup \{A, B\}$, leaving out gaps that we can control. Then we will
combine the gaps across all slices, and we will fill them in with copies of
members of $\mathcal{F}$. The following proposition will tell us what gaps we
should leave in the slices so that their union could be filled in later on.

\begin{prop} \label{prop:fillin}
    
    Let $t$ be a positive integer and take not necessarily distinct sets
    $P_1, \dotsc, P_t \in \F$. Define $Q_0, \dotsc, Q_t \subset S \times
    U^t$ by setting 
    $$
        Q_i = 
        \begin{cases}
            P_i \times C_{i,t} & \text{if } 1 \le i \le t, \\
            S \times C_{0,t} & \text{if } i = 0.
        \end{cases}
    $$
    Then the set $(S \times U^t) \sm (Q_0 \cup \dotsb \cup Q_t)$ can be
    partitioned into copies of members of $\F \cup \{A \cup B\}$.

\end{prop}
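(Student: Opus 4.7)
The plan is to stratify $S \times U^t$ using the disjoint decomposition $U = \Ac \sqcup (A \cap B) \sqcup \Bc$. For each $i \in \{1, \dots, t\}$, define the axis
\[
V_i = S \times \Ac^{i-1} \times U \times \Ac^{t-i} = \{(s,u) \in S \times U^t : u_j \in \Ac \text{ for all } j \ne i\},
\]
and let $W = (S \times U^t) \setminus \bigcup_{i=1}^t V_i = \{(s,u) : |\{j : u_j \notin \Ac\}| \ge 2\}$. For $i \ne j$ one has $V_i \cap V_j = S \times \Ac^t = Q_0$, and every $Q_k$ is contained in some $V_i$ (trivially $Q_0 \subset V_i$ for any $i$, and $Q_k \subset S \times C_{k,t} \subset V_k$ for $k \ge 1$). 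Hence $W$ meets no $Q_k$, so $W \subseteq R := (S \times U^t) \setminus \bigcup_{k=0}^t Q_k$, and $R$ decomposes disjointly as $R = (R \cap V_1) \sqcup \dots \sqcup (R \cap V_t) \sqcup W$.

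To partition each $R \cap V_i$, split $V_i$ according to the type of $u_i$ into $Q_0 \sqcup V_i^\ast \sqcup (S \times C_{i,t})$, where $V_i^\ast = S \times \Ac^{i-1} \times (A \cap B) \times \Ac^{t-i}$. The set $V_i^\ast$ is disjoint from every $Q_k$: the constraint $u_i \in A \cap B$ rules out $Q_0$ and $Q_i$, while $u_j \in \Ac$ for $j \ne i$ rules out $Q_j$ for $j \ne i$. Therefore $R \cap V_i = V_i^\ast \sqcup (S \setminus P_i) \times C_{i,t}$, and using $A = (A \cap B) \sqcup \Bc$ this regroups as
\[
R \cap V_i = \bigl[P_i \times \Ac^{i-1} \times (A \cap B) \times \Ac^{t-i}\bigr] \sqcup \bigl[(S \setminus P_i) \times \Ac^{i-1} \times A \times \Ac^{t-i}\bigr].
\]
The first bracket is a disjoint union of copies of $P_i \in \F$, one for each point of $\Ac^{i-1} \times (A \cap B) \times \Ac^{t-i}$ (with $P_i$ sitting in the first coordinate); the second is a disjoint union of copies of $A$, with $A$ placed in the $i$-th coordinate and every other coordinate a singleton.

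To partition $W$, stratify by the non-$\Ac$ position set $J = \{j : u_j \notin \Ac\}$, where $|J| \ge 2$. Each stratum is $S \times \prod_{j \in J} A \times \prod_{j \notin J} \Ac$ (with factors at the indicated positions). Picking any $j^\ast \in J$ and, for each choice of $s$ and each choice of $u_j$ for $j \ne j^\ast$, letting $u_{j^\ast}$ range over $A$ gives a copy of $A$; this partitions each stratum, and hence $W$, into copies of $A$.

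The only genuinely nontrivial step is the regrouping of $R \cap V_i$: one must notice that the ``slack'' $V_i^\ast$ (which lies in $R$ because $A \cap B$ sits in no $C_{j,t}$) is exactly what is needed to extend the bad slab $(S \setminus P_i) \times C_{i,t}$ into full copies of $A$ along the $i$-th coordinate, leaving behind $P_i \times \Ac^{i-1} \times (A \cap B) \times \Ac^{t-i}$, which cleanly splits into copies of $P_i \in \F$. Everything else is routine disjointness bookkeeping, and the resulting partition uses only copies from $\F \cup \{A, B\}$ (in fact only from $\F \cup \{A\}$).
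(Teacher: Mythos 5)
Your proof is correct, and it takes a genuinely different route from the paper. The paper proves \Cref{prop:fillin} by induction on $t$, peeling off the last coordinate: the set $(S\times U^t)\sm X$ is cut into three pieces $Y_1,Y_2,Y_3$ (see \Cref{fig:blueprint}), where $Y_1$ reduces to the case $t-1$, $Y_2$ is a union of copies of $A$, and $Y_3$ is a union of copies of $P_t$. You instead give a closed-form, non-inductive decomposition by stratifying $S\times U^t$ according to the set of coordinates lying outside $\Ac$: the ``axis'' pieces $R\cap V_i$ (at most one non-$\Ac$ coordinate, in a prescribed position) each split into copies of $P_i$ and copies of $A$ after the regrouping you describe, and the remainder $W$ (at least two non-$\Ac$ coordinates) is swept up entirely by copies of $A$ along any one of its free $A$-directions. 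The disjointness bookkeeping — in particular that the $V_i$ overlap only in $Q_0$, which is excised, and that $V_i^\ast$ avoids every $Q_k$ — all checks out, and as you note both your proof and the paper's use only $\F\cup\{A\}$, never $B$. Your version makes the global structure of the partition visible at once rather than unrolling an induction, at the mild cost of having to argue the disjointness of the strata directly; it is a nice alternative.
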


\begin{proof}
    We use induction on $t$. We take $t = 0$ to be the base case. Although the
    set $C_{0,0}$ had not been defined, we may interpret $S \times C_{0,0}$
    and $S \times U^0$ as both being the set $S$, in which case the
    conclusion says that the empty set can be partitioned into copies of members
    of $\F \cup \{A, B\}$, which is trivially true.

    Now suppose that $t \ge 1$. We write $X = Q_0 \cup \dotsb \cup Q_t$ and
    $X^\ast = (S \times C_{0,t-1}) \cup (P_1 \times C_{1,t-1}) \cup \dotsb
    \cup (P_{t-1} \times C_{t-1,t-1})$. By the induction hypothesis, $(S
    \times U^{t-1}) \sm X^\ast$ can be partitioned into copies of $\F \cup \{A,
    B\}$. Moreover, using the fact that $X = (X^\ast \times \Ac) \cup Q_t$, we
    can partition $(S \times U^t) \sm X$ into three sets $Y_1, Y_2, Y_3$,
    where
    \begin{align*}
        Y_1 &= \left( (S \times U^{t-1}) \sm X^\ast \right) \times \Ac, \\
        Y_2 &= \left( (S \times U^{t-1}) \sm (P_t \times C_{0,t-1}) \right)
               \times A), \\
        Y_3 &= P_t \times U^{t-1} \times (A \cap B).
    \end{align*}
    It is clear from \Cref{fig:blueprint} that these sets do partition $(S
    \times U^t) \sm X$. Moreover, $Y_1$ can be partitioned into copies of
    members of $\F \cup \{A, B\}$ (by the induction hypothesis); $Y_2$ is
    trivially a disjoint union of copies of $A$; $Y_3$ is a disjoint union of
    copies of $P_t$, which is a member of $\mathcal{F}$.
\end{proof}

\begin{figure}[ht]
    \begin{centering}
        \begin{tikzpicture}
            \draw[thick] (0,0)--(7,0)--(7,4)--(0,4)--cycle;
            \draw[thick,fill=lightgray] (0,0)--(4,0)--(4,1)--(0,1)--cycle;
            \draw[thick,fill=lightgray] (2,3)--(5,3)--(5,4)--(2,4)--cycle;
            \draw[thick,dashed] (4,1)--(7,1);
            \draw[thick,dashed] (2,1)--(2,3);
            \draw[thick,dashed] (5,1)--(5,3);
            \draw (5.5,0.5) node {$Y_1$};
            \draw (1,2.5) node {$Y_2$};
            \draw (6,2.5) node {$Y_2$};
            \draw (3.5,2) node {$Y_3$};
            \draw (2, 4.1) -- (2,4.2) -- node[above]{$P_t \times C_{0,t-1}$} (5,4.2) -- (5,4.1);
            \draw (0, -0.1) -- (0, -0.2) -- node[below]{$X^\ast$} (4, -0.2) -- (4, -0.1);
            \draw (7.1,0) -- (7.2,0) -- node[right]{$\Ac$}(7.2, 1) -- (7.1, 1);
            \draw (7.1,3) -- (7.2,3) -- node[right]{$\Bc$}(7.2, 4) -- (7.1, 4);
            \draw[<->] (0,5.3) -- node[above]{$S \times U^{t-1}$} (7,5.3);
            \draw[<->] (-0.8,0) -- node[left] {$U$} (-0.8,4);
        \end{tikzpicture}
        \caption{The set $X$ is shaded; $Y_1, Y_2, Y_3$ partition $(S \times U^{t-1}) \sm X$.}
        \label{fig:blueprint}
    \end{centering}
\end{figure}

We recall that, for some positive integer $r$, $\F$ contains an \rpart{r} of
$S$. In other words, there exist not necessarily distinct sets $P_1, \dots, P_m
\in \F$ such that every element of $S$ is contained in precisely $r$ of them. We
will use the sets $P_1, \dotsc, P_m$ to prove the following proposition.

\begin{prop} \label{prop:manychoices}

        Let $r$ be as above.  For any positive integer $k$ there exists an
        integer $l \ge k$ with the following property. For any distinct numbers
        $j_1, \dotsc, j_t \in \{1, \dotsc, l\}$, if $t \le k$ and $t \equiv 1
        \pmod{r}$, then the set $S \times \left( U^l \sm \bigcup_{u=1}^t
        C_{j_u,l} \right)$ can be partitioned into copies of members of $\F \cup
        \{A, B\}$.

\end{prop}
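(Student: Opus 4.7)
My plan is to argue by induction on $t$ with step $r$, treating only values $t \equiv 1 \pmod r$. The base case $t = 1$ is straightforward: \Cref{prop:onecorner} provides a partition of $U^l \setminus C_{j_1, l}$ into copies of $A$ and $B$ inside $U^l$. Since $A, B \subseteq U$, each copy has its special coordinate among the $U$-coordinates, so for every $s \in S$ the set $\{s\} \times X$ (with $X$ a copy of $A$ or $B$ in $U^l$) is itself a copy of $A$ or $B$ in $S \times U^l$. Taking the disjoint union over all $s \in S$ gives the desired partition.

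For the inductive step from $t - r$ to $t$, I plan to combine \Cref{prop:fillin} with the $r$-partition $P_1, \dots, P_m \in \F$ of $S$. Since $\sum_j \mathbf{1}_{P_j} = r \cdot \mathbf{1}_S$, distributing the $P_j$'s appropriately among $r$ coordinates in \Cref{prop:fillin} lets one account for the combined $S$-mass of $r$ ``full'' uncovered corners using a single application. The congruence $t = 1 + rs$ aligns with this exactly: the ``$+1$'' will correspond to the single full-$S$ corner $S \times C_{0, l}$ that \Cref{prop:fillin} leaves uncovered (after a coordinate permutation aligning this corner with one of the $C_{j_u, l}$), and the ``$rs$'' will correspond to $s$ rounds of $r$-partition aggregation, each round absorbing $r$ of the remaining target corners.

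Concretely, I would choose $l$ large enough (depending on the inductive parameter and on $r$ and $m$), apply \Cref{prop:fillin} in $S \times U^l$ with an assignment of $P_i'$'s drawn from the $r$-partition, and then reduce the resulting uncovered region to the desired form. The reduction would use \Cref{cor:multiplechanges} to reshape the $U^l$-components into permitted configurations, followed by the inductive hypothesis (for $t - r$ corners) to handle the residual corner configuration.

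The main obstacle will be executing this reduction cleanly. \Cref{prop:fillin} produces one ``full-$S$'' uncovered piece $S \times C_{0, l}$ together with many ``partial-$P_i'$'' pieces $P_i' \times C_{i, l}$, whereas the target uncovered region has $t$ full-$S$ pieces and nothing else. Bridging this gap requires simultaneously (a) aggregating $r$ partial-$P_i'$ pieces via the identity $\sum_j \mathbf{1}_{P_j} = r \mathbf{1}_S$ into a full-$S$ piece at each of $s$ designated corners, (b) ensuring the remaining partial pieces either cancel out or form a valid input for the inductive hypothesis, and (c) guaranteeing that no part of a desired uncovered corner $S \times C_{j_u, l}$ is accidentally covered in the process. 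Achieving all three inside a single coherent global partition, rather than by informal overlays, is the central technical challenge; I expect this to require either a higher-dimensional auxiliary construction in which several applications of \Cref{prop:fillin} can be run in parallel, or a careful dissection of $S \times U^l$ into sub-regions each matched to a single application.
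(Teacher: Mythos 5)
Your base case ($t=1$) is fine, and your instinct to use \Cref{prop:fillin} together with the $r$-partition is aimed in the right direction. But the inductive step is not actually carried out, and the difficulty you flag at the end is the whole proof: you describe what \emph{would} need to happen, without exhibiting a construction that does it. In particular, the proposed ``aggregation'' mechanism does not fit the shape of \Cref{prop:fillin}. That proposition places each $P_i$ at a \emph{distinct} corner $C_{i,t}$; there is no way within its statement to stack $r$ of the $P_j$'s at the same corner position and thereby synthesise a full $S\times C_{j,l}$ gap. The identity $\sum_j \mathbf{1}_{P_j} = r\mathbf{1}_S$ is a statement about multiplicities over $S$, not about a geometric overlap at one location in $U^l$, and \Cref{prop:fillin} is a genuine partition, not a fractional cover, so you cannot add its gap sets pointwise. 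As written there is no route from the uncovered set it produces (one full $S\times C_{0,l}$ plus $am$ ``thin'' pieces $P_i\times C_{i,l}$, all sitting in a slab) to the desired $t$ full corners plus a residual suitable for the induction hypothesis.

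The paper's argument avoids induction on $t$ entirely and bridges the gap by a change of viewpoint: it applies \Cref{prop:fillin} exactly once in the low coordinate range $\{0,\dotsc,am\}$ (taking $P_0 = S$ and the $r$-partition repeated $a$ times), chosen disjoint from the target set $J$ of size $t$; then it passes to cross-sections. For each fixed $z\in S$, the $r$-partition property says $z$ lies in exactly $t = ar+1$ of $P_0,\dotsc,P_{am}$, so the cross-section $Y_z$ is $U^l$ minus a slab, plus $t$ ``filled-in'' corners $C_{i,l}$ with $i \le am$, minus the $t$ target corners $C_{j,l}$ with $j\in J$. Because $|I_z| = |J| = t$ and the index sets are disjoint, \Cref{cor:multiplechanges} handles each slice. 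The trick is that the ``aggregation'' you want happens automatically, slice by slice, via cardinality matching in \Cref{cor:multiplechanges}, rather than by any geometric stacking in $U^l$. Without this cross-section step (or a genuinely new mechanism), your outline remains a plan rather than a proof.
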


\begin{proof}[Proof (see \Cref{fig:manychoices}).]
    Given $k$, fix any $l \ge k + (k-1)m/r$. Given distinct $j_1, \dotsc,
    j_t \in \{1, \dotsc, l\}$, we may assume (after a permutation of
    coordinates, if necessary), that $\{j_1, \dotsc, j_t\} = \{l-t+1, \dotsc,
    l\}$. We denote this set by $J$. Since $t \le k$ and $t \equiv 1 \pmod{r}$ by assumption,
    we may write $t = ar + 1$ for some integer $0 \le a \le (k-1)/r$. We will
    prove that the set
    $$
        Y = S \times \left( U^l \sm \bigcup_{j\in J} C_{j,l} \right)
    $$
    can be partitioned into copies of members of $\F \cup \{A, B\}$.
    
    Extend $P_1, \dotsc, P_m$ to a longer list $P_1, \dotsc, P_{am}$ by setting
    $P_{i+m} = P_i$ for every $m+1\le i\le am$. The only important property of
    this new list is that every member of the original list is repeated exactly
    $a$ times. Moreover, set $P_0 = S$. Then every element of $S$ is contained
    in exactly $ar+1 = t$ members of the list $P_0, \dotsc, P_{am}$. We define
    $$
        X = (S \times U^{am} \times \Ac^{l-am}) \sm \left(
        \bigcup_{i=0}^{am} P_i \times C_{i,l} \right).
    $$
    Since $X = \left( (S \times U^{am}) \sm \left( \bigcup_{i=0}^{am} P_i \times
    C_{i,am} \right) \right) \times \Ac^{l-am}$, it follows from
    \Cref{prop:fillin} that $X$ can be partitioned into copies of members of $\F
    \cup \{A, B\}$. Since $\min J > l-k \ge am$, the set $X$ is disjoint from $S
    \times C_{j,l}$ for any $j \in I$, and hence $X \subset Y$. Therefore, it
    only remains to prove that $Y \setminus X$ can be partitioned into copies of
    members of $\F \cup \{A, B\}$.

    For any $z \in S$, we denote by $S_z$ the cross-section of $Y \sm X$ at $z$,
    that is,
    $$
        Y_z = \{ y \in U^l : (z, y) \in Y \sm X \}.
    $$
    For the moment, let us focus on one fixed $z \in S$. By construction of
    $P_0, \dotsc, P_{am}$, there are exactly $t$ values of $i$ for which $z \in
    P_i$. Let $I$ be the set of these values. Then
    $$
        Y_z = U^l \sm \left( \left( U^{am} \times \Ac^{l-am} \right)
            \cup \left( \bigcup_{j \in J} C_{j,l} \right) \sm \left(
            \bigcup_{i\in I} C_{i,l} \right)\right).
    $$
    Since $|I| = |J| = t$, $I \subset \{0, \dotsc, am\}$ and $J \subset \{am+1,
    \dotsc, l\}$, \Cref{cor:multiplechanges} implies that $Y_z$ can be
    partitioned into copies of members of $\F \cup \{A, B\}$.
    
    Now we are done: $Y = X \cup \left( \bigcup_{z \in S} \{z\} \times
    Y_z \right)$, and we have proved that $X$ and every $Y_z$ can be
    partitioned into copies of members of $\F \cup \{A, B\}$.
\end{proof}

\begin{figure}[ht]
    \begin{centering}
        \begin{tikzpicture}
            \draw (2.40,7) node{$\cdots$};
            \draw (5.20,7) node{$\cdots$};
            \draw (10.50,7) node{$\cdots$};
            \fill[pattern=north east lines] (0.00,2.40)--(12.80,2.40)--(12.80,3.20)--(0.00,3.20)--cycle;
            \draw[lightgray,fill=lightgray] (0.00,0.00)--(7.20,0.00)--(7.20,6.40)--(0.00,6.40)--cycle;
            \draw[fill=white] (0.08,0.00)--(0.72,0.00)--(0.72,6.40)--(0.08,6.40)--cycle;
            \draw (0.08,6.50)--(0.08,6.70)--node[above]{$C_{0,l}$}(0.72,6.70)--(0.72,6.50);
            \draw[fill=white] (0.88,0.00)--(1.52,0.00)--(1.52,3.20)--(0.88,3.20)--cycle;
            \draw (0.88,6.50)--(0.88,6.70)--node[above]{$C_{1,l}$}(1.52,6.70)--(1.52,6.50);
            \draw[fill=white] (1.68,1.60)--(2.32,1.60)--(2.32,4.80)--(1.68,4.80)--cycle;
            \draw[fill=white] (2.48,3.20)--(3.12,3.20)--(3.12,6.40)--(2.48,6.40)--cycle;
            \draw[fill=white] (3.28,4.80)--(3.92,4.80)--(3.92,6.40)--(3.28,6.40)--cycle;
            \draw (3.28,6.50)--(3.28,6.70)--node[above]{$C_{m,l}$}(3.92,6.70)--(3.92,6.50);
            \draw[fill=white] (3.28,0.00)--(3.92,0.00)--(3.92,1.60)--(3.28,1.60)--cycle;
            \draw[fill=white] (4.08,0.00)--(4.72,0.00)--(4.72,3.20)--(4.08,3.20)--cycle;
            \draw[fill=white] (4.88,1.60)--(5.52,1.60)--(5.52,4.80)--(4.88,4.80)--cycle;
            \draw[fill=white] (5.68,3.20)--(6.32,3.20)--(6.32,6.40)--(5.68,6.40)--cycle;
            \draw[fill=white] (6.48,4.80)--(7.12,4.80)--(7.12,6.40)--(6.48,6.40)--cycle;
            \draw[fill=white] (6.48,0.00)--(7.12,0.00)--(7.12,1.60)--(6.48,1.60)--cycle;
            \draw (6.48,6.50)--(6.48,6.70)--node[above]{$C_{am,l}$}(7.12,6.70)--(7.12,6.50);
            \draw[fill=white] (12.08,0.00)--(12.72,0.00)--(12.72,6.40)--(12.08,6.40)--cycle;
            \draw (12.08,6.50)--(12.08,6.70)--node[above]{$C_{l,l}$}(12.72,6.70)--(12.72,6.50);
            \draw[fill=white] (11.28,0.00)--(11.92,0.00)--(11.92,6.40)--(11.28,6.40)--cycle;
            \draw (11.28,6.50)--(11.28,6.70)--node[above]{$C_{l-1,l}$}(11.92,6.70)--(11.92,6.50);
            \draw[fill=white] (10.48,0.00)--(11.12,0.00)--(11.12,6.40)--(10.48,6.40)--cycle;
            \draw[fill=white] (9.68,0.00)--(10.32,0.00)--(10.32,6.40)--(9.68,6.40)--cycle;
            \draw[fill=white] (8.88,0.00)--(9.52,0.00)--(9.52,6.40)--(8.88,6.40)--cycle;
            \draw (8.88,6.50)--(8.88,6.70)--node[above]{$C_{l-t+1,l}$}(9.52,6.70)--(9.52,6.50);
            \draw[thick,fill=none] (0.00,0.00)--(12.80,0.00)--(12.80,6.40)--(0.00,6.40)--cycle;
            \draw[<->] (-0.80,0.00)--node[left]{$S$}(-0.80,6.40);
            \draw[<->] (0.00,-0.80)--node[below]{$U^l$}(12.80,-0.80);
            \draw[thick,dashed] (0.02,2.40)--(12.78,2.40)--(12.78,3.20)--(0.02,3.20)--cycle;
            \draw (13.20,2.80) node{$Y_z$};
            \draw[pattern = north east lines] (0.08,2.40)--(0.72,2.40)--(0.72,3.20)--(0.08,3.20)--cycle;
            \draw[pattern = north east lines] (0.88,2.40)--(1.52,2.40)--(1.52,3.20)--(0.88,3.20)--cycle;
            \draw[pattern = north east lines] (1.68,2.40)--(2.32,2.40)--(2.32,3.20)--(1.68,3.20)--cycle;
            \draw[pattern = north east lines] (4.08,2.40)--(4.72,2.40)--(4.72,3.20)--(4.08,3.20)--cycle;
            \draw[pattern = north east lines] (4.88,2.40)--(5.52,2.40)--(5.52,3.20)--(4.88,3.20)--cycle;
        \end{tikzpicture}
        \caption{The set $X$ is shaded, a slice $Y_z$ is hatched diagonally.
            \Cref{prop:fillin,cor:multiplechanges}, respectively, imply that $X$
            and $Y_z$ can be partitioned into copies of members of $\F \cup \{A,
            B\}$.}
        \label{fig:manychoices}
    \end{centering}
\end{figure}
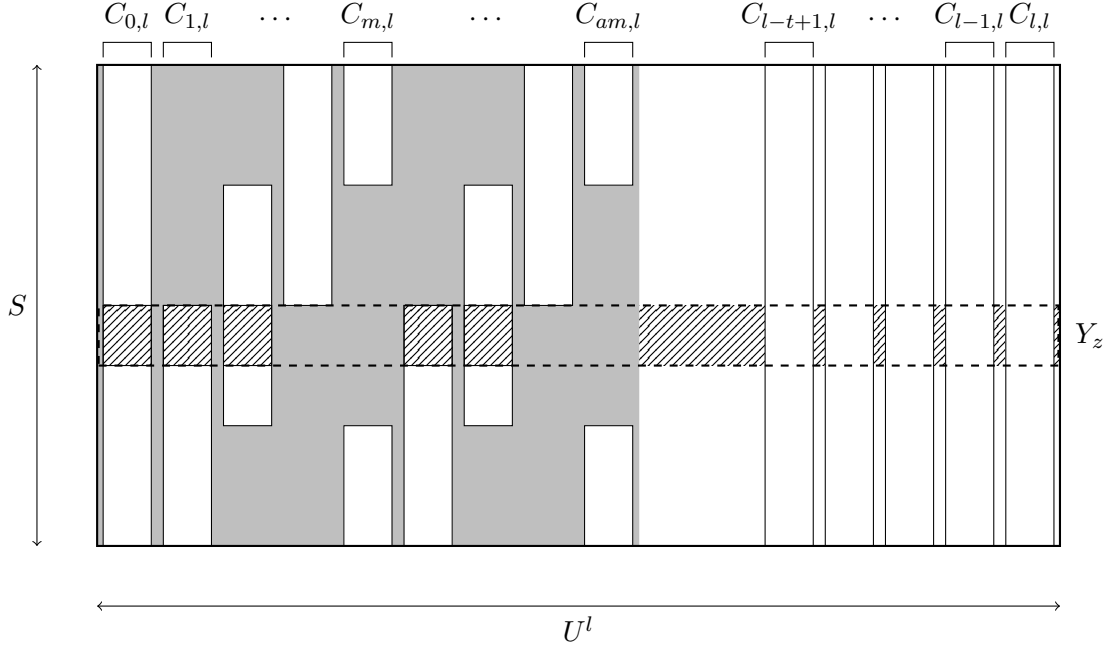

We are now ready to prove \Cref{lem:main}.

\begin{proof}[Proof of \Cref{lem:main}.]
    We begin by recalling that $\F$ contains a \modpart{r} of $S$. In other
    words, there exists a family of not necessarily distinct sets $R_1, \dotsc,
    R_k \in \F$ such that every $x \in S$ is contained in exactly $1 + ra_x$
    members of this family, where $a_x$ is an integer. Furthermore,
    \Cref{prop:manychoices} provides us with a positive integer $n \ge k$ such
    that, for any set $I \subset \{1, \dotsc n\}$ that satisfies $|I| \equiv 1
    \pmod{r}$ and $|I| \le k$, the set $S \times \left( U^n \sm \bigcup_{i \in
    I} C_{i,n} \right)$ can be partitioned into copies of members of $\F \cup
    \{A, B\}$. We will show that $S \times U^n$ can be partitioned into copies
    of members of $\F \cup \{A, B\}$.

    We define
    $$
        X = \left( S \times U^n \right)
            \sm \left( \bigcup_{i=1}^k R_k \times C_{i,n}  \right)
    $$
    and, for any $y \in S$, we let $X_y$ denote the cross-section of $X$ at $y$,
    that is, $X_y = \{ x \in U^n : (y, x) \in X \}$. Any $y \in S$ is contained
    in $1 + ra_y$ members of the family $R_1, \dotsc, R_k$. Therefore, if we
    write $J_y = \{ j \in [k] : y \in R_j\}$, then $|J_y| \equiv 1
    \pmod{r}$ and $|J_y| \le k$. Moreover, it is easy to see that
    $$
        X_y = U^n \sm \left( \bigcup_{j \in J_y} C_{j,n} \right).
    $$
    By \Cref{prop:manychoices}, $S \times X_y$ can be partitioned into copies of
    members of $\F \cup \{A, B\}$. Therefore, so can be $S \times X$, which is
    the disjoint union of sets $S \times \{y\} \times X_y$, $y \in S$.

    Finally, observe that $S^2 \times U^n$ is the disjoint union of $S \times X$
    and sets $S \times R_i \times C_{i,n}$, $1 \le i \le k$. Each set $S \times
    R_i \times C_{i,n}$ is trivially a union of disjoint copies of $R_i$, which
    is a member of $\F$. Therefore, $S^2 \times U^n$ can be partitioned into
    copies of members of $\F \cup \{A, B\}$, as required.
\end{proof}

\section{Weak partitions} \label{sec:particular}

\subsection{Constructing an \rpart{r} of $\BL{n}$}

Our aim in this subsection is to prove \Cref{lem:r-partition}, which asserts the
existence of an $r$-partition of $\BL{n}$ into copies of $P$ for some $n, r$.
Our proof is somewhat technical, but not very difficult.

Recall that by our earlier definition a weight function is an assignment of 
non-negative integer weights to sets from some selected family. We now
extend this definition to allow more general weights. Namely, given a
set $V \subset \R$ and a set family $\F$, a \emph{$V$-valued weight function on
$\F$} is a function $w : \F \to V$. Usually, we will take $V$ to be $\Z$, $\Z^+$
or $\Q^+$, where $S^+$ is defined to be $S \cap [0, \infty)$ for any $S \subset
\R$. We note that a weight function in the old sense is precisely a
$\Z^+$-valued weight function in the new sense.

Moreover, if $\F$ is a family of subsets of some set $X$, for any $x \in X$ we
define the \emph{multiplicity of $x$ for $w$}, denoted $N_w(x)$, to be the
total weight assigned to the members of $\F$ that contain $x$. That is,
$$
N_w(x) = \sum_{\substack{A \in \F \\ x \in A}} w(A).
$$
Moreover, for any $Y \subset X$, we set $N_w(Y) = \sum_{y \in Y} N_w(y)$.  With
these definition at hand, we can restate \Cref{lem:r-partition} in a form that
is slightly more convenient for the proof.

\begin{lemprimed}{lem:r-partition} \label{lem:r-partition-primed}

    Let $P$ be a finite poset with a greatest and a least element. Then
    there exist a positive integer $n$ and a $\Q^+$-valued weight function
    $w$ on the copies of $P$ in $\BL{n}$ such that $N_w(x) = 1$ for all $x \in
    \BL{n}$.

\end{lemprimed}

To see why \Cref{lem:r-partition-primed} is equivalent to
\Cref{lem:r-partition}, observe that a $\Q^+$-valued weight function $w$ on a
finite set family $\F$ can be made into a $\Z^+$-valued weight function by
multiplying it by the least common multiple of the denominators of the $w(A)$
for $A \in \F$.  Moreover, if $N_w(x) = 1$ for all $x$, then the resulting
$\Z^+$-valued weight function $rw$ satisfies $N_{rw}(x) = r$ for all $x$.

The main idea in the proof is to look for a weight function that is symmetric
with respect to all permutions of the ground set $\{1, \dotsc, n\}$. Such a
weight function can be obtained by averaging any another weight function over
all permutations of $\{1, \dotsc, n\}$. This idea essentially removes the need
to consider the structure of the poset $P$, and converts
\Cref{lem:r-partition-primed} into a question about finding a certain weight
function on the power set of $\{0, \dotsc, n\}$. This is reflected in the
following definition.

Let $P$ be a poset and $n$ a positive integer. Moreover, let $w$ be a
$\Q^+$-valued weight function on the copies of $P$ in $\BL{n}$. We define a new
$\Q^+$-valued weight function $\sym{w}$, also on the copies of $P$ in $\BL{n}$,
by setting
$$
\sym{w}(A) = \frac{1}{n!} \sum_{\pi \in \textrm{Perm}(n)} w\big( \pi(A) \big)
$$
for all $A$ that are copies of $P$ in $\BL{n}$. Here $\textrm{Perm}(n)$ denotes
the set of permutations of $\{1, \dotsc, n\}$ and we recall that $\pi(A)$
denotes the image of $A$ after permuting the coordinates of $\BL{n}$ according
to $\pi$.

Since elements of $\BL{n}$ are subsets of $\{1, \dotsc, n\}$, it makes sense to
write $|x|$ for $x \in \BL{n}$ to denote the size of $x$. We partition $\BL{n}$
into levels $L_0, \dotsc, L_n$, where $L_k = \{ x \in \BL{n} : |x| = k\}$. Then,
for any $x \in L_k$,
$$
N_{\sym{w}}(x) = \frac{1}{\binom{n}{k}} N_w(L_k).
$$
Therefore, our task is reduced to finding $w$ such that $N_w(L_k) =
\binom{n}{k}$ for all $k$. To this aim, we would like to have a tool for
embedding $P$ into $\BL{n}$ while keeping control on levels into which we
map the elements of $P$. The following proposition provides us with such a tool.

We say that a set $A \subset \Z$ is \emph{$d$-scattered} if, for any distinct
$i, j \in A$, we have $|i - j| \ge d$.

\begin{prop} \label{prop:embeddings}
    Let $P$ be a finite poset with a greatest and a least element. Then there
    exists a positive integer $d$ such that, for any integer $n \ge (|P|-1)d$
    and any $d$-scattered set $A \subset \{0, \dotsc, n\}$ of size $|P|$, there
    exists an embedding $\phi : P \to \BL{n}$ satisfying
    $$
        \{|\phi(x)| : x \in P\} = A.
    $$
    In other words, for any $0 \le k \le n$,
    $$
        |L_k \cap \phi(P)| =
        \begin{cases*}
            1 & if $k \in A$, \\
            0 & otherwise.
        \end{cases*}
    $$
\end{prop}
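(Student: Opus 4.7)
The plan is to pick one small embedding of $P$ into a Boolean lattice and then stretch it out across the prescribed levels by attaching a chain in unused coordinates.

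First I would fix an auxiliary embedding $\psi : P \to \BL{m}$ satisfying $\psi(\hat{0}) = \es$ and $\psi(\hat{1}) = [m]$; for instance, $m = |P|-1$ together with $\psi(p) = \{q \in P \sm \{\hat{0}\} : q \le_P p\}$ works (it is order-preserving, and if $\psi(p) \subseteq \psi(p')$ then either $p = \hat 0 \le_P p'$ trivially or $p \in \psi(p) \subseteq \psi(p')$ so $p \le_P p'$). I would then set $d = m$.

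Given $n \ge (|P|-1)d$ and a $d$-scattered set $A = \{a_1 < \dotsb < a_{|P|}\} \subseteq \{0, \dotsc, n\}$, choose any linear extension $p_1, \dotsc, p_{|P|}$ of $P$ (so automatically $p_1 = \hat{0}$ and $p_{|P|} = \hat{1}$). Set $t_i = a_i - |\psi(p_i)|$ and $T_i = \{m+1, m+2, \dotsc, m + t_i\} \subseteq \{m+1, \dotsc, n\}$. Since $|\psi(p_i)| \in [0,m]$ and $a_{i+1} - a_i \ge d = m$, the quantities $t_i$ satisfy $0 \le t_1 \le \dotsb \le t_{|P|} \le n - m$, so the $T_i$ form a nested chain inside $\{m+1, \dotsc, n\}$. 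Finally define $\phi(p_i) = \psi(p_i) \cup T_i \subseteq [n]$.

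The verification of the embedding property is then routine. The level condition $|\phi(p_i)| = |\psi(p_i)| + t_i = a_i$ is immediate. If $p_i \le_P p_j$ then $i \le j$ by the linear extension and $\psi(p_i) \subseteq \psi(p_j)$ since $\psi$ is order-preserving, giving $\phi(p_i) \subseteq \phi(p_j)$; conversely, if $\phi(p_i) \subseteq \phi(p_j)$, intersecting with $[m]$ gives $\psi(p_i) \subseteq \psi(p_j)$ and hence $p_i \le_P p_j$ since $\psi$ is an embedding. The only issue requiring any thought is calibrating $d$ relative to $m$ so that the $t_i$ are non-negative and non-decreasing, which is exactly why we take $d \ge m$; beyond that the construction is a direct check.
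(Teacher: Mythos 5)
Your proof is correct and takes essentially the same approach as the paper: fix a base embedding $\psi$ of $P$ into some small Boolean lattice (with least mapping to least and greatest to greatest), and then stretch the image to the prescribed levels by appending an initial segment of the unused coordinates, using the $d$-scattered hypothesis to ensure those initial segments are nested. The only cosmetic differences are that you give an explicit down-set embedding $\psi$ (so $d = |P|-1$ is made concrete, whereas the paper simply takes $d$ to be the dimension of an arbitrary such embedding) and that you index the elements of $P$ by a generic linear extension, while the paper orders them by $|\psi(p_i)|$; both are valid and amount to the same verification.
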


\begin{proof}
    We start by recalling that, since $P$ is finite, it can be embedded into
    $\BL{k}$ for some $k$. Let $\psi : P \to \BL{k}$ be an embedding which maps
    the greatest element of $P$ to the greatest element of $\BL{k}$ and the
    least element of $P$ to the least element of $\BL{k}$. We write $s =
    |P|$ and list the elements of $P$ as $p_1, \dotsc, p_s$ in the order where
    $0 = |\psi(p_1)| \le \dotsb \le |\psi(p_s)| = k$.

    We will prove that $d = k$ works. Indeed, take any integer $n \ge (s-1)k$
    and let $A \subset \{0, \dotsc, n\}$ be a $k$-scattered set of size $s$.
    Then $A = \{a_1, \dotsc, a_s\}$, where $0 \le a_1 < \dotsb < a_s \le n$ and
    $a_{i+1} \ge a_i + k$ for all $0 \le i \le s-1$. For every $1 \le i \le s$,
    we set
    $$
        \phi(p_i) = \psi(p_i) \cup \{k+1, \dotsc, k + a_i - |\psi(p_i)|\}.
    $$
    To prove that $\phi : P \to \BL{n}$ is a well-defined embedding, we have to
    check that $0 \le a_1 - |\psi(p_1)| \le \dotsb \le a_s - |\psi(p_s)| \le
    n-k$.  However, if we prove this, then it is trivial to see that
    $|\phi(p_i)| = a_i$ for all $i$, as required.

    First, we observe that $a_1 - |\psi(p_1)| = a_1 \ge 0$ and $a_s -
    |\psi(p_s)| = a_s - k \le n - k$. Furthermore, for any $1 \le i \le s-1$, we
    have $a_{i+1} - |\psi(p_{i+1})| \ge a_i + k - k = a_i \ge a_i -
    |\psi(p_i)|$, and so we are done.
\end{proof}

\begin{prop} \label{prop:inpowerset}
    
    Let $X$ be a finite set and $t$ a positive integer. If $f : X \to \Q^+$ is a
    function such that
    $$
        t \max_{x \in X} f(x) \le \sum_{x \in X} f(x),
    $$
    then there exists a $\Q^+$-valued weight function $w$ on the family of
    $t$-element subsets of $X$, such that $N_w(x) = f(x)$ for all $x \in X$.

\end{prop}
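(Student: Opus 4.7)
The plan is to recognise the existence of $w$ as a linear feasibility problem and apply LP duality. Finding $w : \binom{X}{t} \to \Q^+$ with the prescribed marginals $N_w(x) = f(x)$ is a system of linear equations in the non-negative variables $w(S)$. Since all of the data is rational, it suffices to establish feasibility over $\R_{\ge 0}$: the feasible region is then a rational polyhedron, and any basic feasible solution (vertex) has rational coordinates.

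By Farkas' lemma, feasibility over $\R_{\ge 0}$ is equivalent to the assertion that every $g : X \to \R$ satisfying $\sum_{x \in S} g(x) \ge 0$ for each $t$-subset $S$ also satisfies $\sum_{x} f(x)\, g(x) \ge 0$. Sorting the values of $g$ as $g_1 \le g_2 \le \dotsb \le g_n$, the first condition is equivalent to $g_1 + \dotsb + g_t \ge 0$, because the minimum of $\sum_{x \in S} g(x)$ over $t$-subsets $S$ is attained on the $t$ positions with smallest $g$-value. Moreover, by the rearrangement inequality, the sum $\sum_x f(x) g(x)$ is minimised (over all matchings of the $f$-values to the $n$ sorted positions) when the largest $f$-values are paired with the smallest $g$-values. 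So it would be enough to show: whenever $a_1 \ge a_2 \ge \dotsb \ge a_n \ge 0$ with $\sum_i a_i = F$ and $t a_1 \le F$, and $g_1 \le g_2 \le \dotsb \le g_n$ with $g_1 + \dotsb + g_t \ge 0$, one has $\sum_{i=1}^n a_i g_i \ge 0$.

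The final inequality I would establish via Abel summation. Writing $M_i = a_i + a_{i+1} + \dotsb + a_n$, the identity
\[
\sum_{i=1}^n a_i g_i \; = \; F g_1 + \sum_{i=2}^n M_i (g_i - g_{i-1})
\]
holds, and each increment $g_i - g_{i-1}$ is non-negative. Because every $a_j \le a_1 \le F/t$, one obtains the lower bounds $M_i \ge F(t-i+1)/t$ for $i \le t$ and $M_i \ge 0$ in general. Substituting these bounds (discarding the non-negative contributions from $i > t$) and telescoping collapses the right-hand side to $(F/t)(g_1 + g_2 + \dotsb + g_t)$, which is non-negative by hypothesis. The main technical obstacle is executing this Abel-summation collapse cleanly; the reduction via Farkas' lemma and the rearrangement inequality is routine.
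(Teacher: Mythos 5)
Your proof is correct, but it takes a genuinely different route from the paper's. The paper scales $f$ to be $\Z^+$-valued with $\sum_x f(x) = Nt$ and then runs an explicit induction on $N$: at each step it finds a $t$-element set $A$ sandwiched between the ``peak'' set $T = \{x : f(x) = N\}$ and the support $S = \{x : f(x) > 0\}$ (the hypothesis guarantees $|T| \le t \le |S|$), decrements $f$ by $1$ on $A$, and recurses; the resulting $w$ is built greedily. You instead phrase existence as LP feasibility, pass to the Farkas dual, use permutation-invariance of the dual constraint plus the rearrangement inequality to reduce to the sorted case, and close with an Abel-summation estimate. Both arguments are complete and the inequality you need does telescope as claimed: with $M_i = a_i + \dotsb + a_n$, the lower bound $M_i \ge F(t-i+1)/t$ for $i \le t$ and $M_i \ge 0$ otherwise gives $\sum_i a_i g_i \ge (F/t)(g_1 + \dotsb + g_t) \ge 0$. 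The paper's approach buys an elementary, self-contained, constructive proof with no appeal to duality or rationality of vertices; yours buys brevity and a clean separation between the combinatorics (reduction to a one-dimensional sorted inequality) and the algebra (Abel summation), at the cost of invoking Farkas and the observation that a nonempty rational polyhedron in the nonnegative orthant has a rational vertex. One small presentational caveat: the phrase ``minimised over all matchings'' is slightly loose, since $f$ and $g$ live on the same ground set; what makes the reduction valid is that the dual constraint on $g$ depends only on the multiset of $g$-values, so you may replace $g$ by $g\circ\sigma$ for the permutation $\sigma$ that anti-sorts $g$ against $f$.
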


\begin{proof}
    Let $r$ be the least common multiple of the denominators of the $f(x)$ over
    all $x \in X$.  After multiplying $f$ by $tr$, we may assume that $f$ takes
    values in $\Z^+$ and that $\sum_{x \in X} f(x)$ is divisible by $t$. We
    denote $\sum_{x \in X} f(x) = Nt$ and we will use induction on $N$.

    If $f(x) = 0$ for all $x \in X$, then the result is trivial. Therefore, we
    may assume that $N \ge 1$. Let
    $S = \{x \in X : f(x) > 0\}$ and $T = \{x \in X : f(x) = N\}$. Since
    $$
        t \max_{x \in X} f(x) \,\le\, \sum_{x \in X} f(x) \,\le\, 
        |S| \max_{x \in X} f(x),
    $$
    it follows that $|S| \ge t$. Moreover, $N|T| \le \sum_{x \in X} f(x) = Nt$,
    and hence $|T| \le t$. Therefore, there exists a set $A$ such that $T
    \subset A \subset S$ and $|A| = t$.

    We define $g : X \to \Z^+$ by setting
    $$
        g(x) =
        \begin{cases*}
            f(x) - 1 & if $x \in A$, \\
            f(x) & otherwise.
        \end{cases*}.
    $$
    Then $\sum_{x \in X} g(x) = (N-1)t$ is non-negative and divisible by $t$.
    Moreover, since $T \subset A$, we have $g(x) \le N-1$ for all $x \in X$.
    Therefore, by the induction hypothesis, there exists a $\Q^+$-valued weight
    function $w'$ on the $t$-element subsets of $X$, such that $N_{w'}(x) =
    g(x)$ for all $x \in X$. We define
    $$
        w(B) =
        \begin{cases*}
            w'(A) + 1 & if $B = A$, \\
            w'(B) & if $B \subset X$, $|B| = t$ and $B \neq A$.
        \end{cases*}
    $$
    This $w$ satisfies the required conditions.
\end{proof}

It is easy to deduce \Cref{lem:r-partition-primed} from
\Cref{prop:embeddings,prop:inpowerset}.

\begin{proof}[Proof of \Cref{lem:r-partition-primed}.]
    Let $P$ be a finite poset with a greatest and a least element. Recall
    that our aim is to find, for some positive integer $n$, a $\Q^+$-valued
    weight function $w$ on the copies of $P$ in $\BL{n}$, such that $N_w(L_i) =
    \binom{n}{i}$ for all $0 \le i \le n$. Indeed, then $N_{\sym{w}}(x) = 1$ for
    all $x \in \BL{n}$.

    Let $d$ be such that, for any $n \ge (|P|-1)d$ and any $d$-scattered set $A
    \subset \{0, \dotsc, n\}$ of size $|P|$, there exists a copy of $P$ in
    $\BL{n}$, say $C$, such that $\{|x| : x \in C\} = A$. The existence of such
    a number $d$ is guaranteed by \Cref{prop:embeddings}. Set $k = |P|d$.

    Choose $n$ large enough to satisfy the inequality $k \binom{n}{\lceil n/2
    \rceil} \le 2^n$. Then \Cref{prop:inpowerset} gives a $\Q^+$-valued weight
    function $w'$ on the $k$-element subsets of $\{0, \dotsc, n\}$ that satisfies
    $N_{w'}(i) = \binom{n}{i}$ for all $0 \le i \le n$.
    
    Let $B$ be a $k$-element subset of $\{0, \dotsc, n\}$. If we consider the
    elements of $B$ in increasing order and take every $d$th element, we obtain
    a $d$-scattered set. In this way we can partition $B$ into $d$-scattered
    sets $B_1, \dotsc, B_d$, each of size $k/d = |P|$. We say that $B$
    \emph{splits} into sets $B_1, \dotsc, B_d$.

    By splitting $k$-element sets we obtain a $\Q^+$-valued weight function
    $w''$ on $d$-scattered $|P|$-element subsets of $\{0, \dotsc, n\}$.  More
    precisely, we define $w''(A) = \sum w'(B)$, summing over all $k$-element
    sets $B \subset \{0, \dotsc, n\}$ with the property that $A$ is one of the
    sets into which $B$ splits. Note that we have $N_{w''}(i) = N_{w'}(i) =
    \binom{n}{i}$ for all $0 \le i \le n$.

    Finally, for any $d$-scattered $|P|$-element set $A \subset \{0, \dotsc,
    n\}$ we choose one copy of $P$ in $\BL{n}$, denoted $C_A$, such that $\{|x|
    : x \in C_A\} = A$. We define a $\Q^+$-valued weight function $w$ on the
    copies of $P$ in $\BL{n}$ by setting
    $$
        w(C) =
        \begin{cases*}
            w''(A) & if $C = C_A$ for some $d$-scattered $|P|$-element set $A
                     \subset \{0, \dotsc, n\}$, \\
            0 & otherwise.
        \end{cases*}
    $$
    We note that every $d$-scattered $|P|$-element set $A \subset \{0, \dotsc,
    n\}$ contributes $w''(A)$ towards both $N_{w''}(i)$ and $N_w(L_i)$ for every
    $i \in A$, and $0$ towards both $N_{w''}(j)$ and $N_w(L_j)$ for every $j
    \not\in A$.  Therefore, $N_w(L_i) = N_{w''}(i) = \binom{n}{i}$ for all $0
    \le i \le n$, as required.
\end{proof}

\subsection{Constructing a \modpart{r} of $\BL{n}$}

Here we prove \Cref{lem:1-mod-r-partition}, which asserts the existence of an
\modpart{r} of $\BL{n}$ into copies of $P$ for some $n$. This proof is shorter,
but slightly trickier than that of \Cref{lem:r-partition}. We begin by recasting
\Cref{lem:1-mod-r-partition} in a form which is stronger, but more convenient to
work with.

\begin{lemprimed}{lem:1-mod-r-partition} \label{lem:1-mod-r-partition-primed}

    Let $P$ be a poset of size $2^k$ with a greatest and a least element.  Then
    there exist a positive integer $n$ and a $\Z$-valued weight function $w$ on
    the copies of $P$ in $\BL{n}$ satisfying $N_w(x) = 1$ for all $x \in
    \BL{n}$.

\end{lemprimed}

We remark that \Cref{lem:1-mod-r-partition-primed} does imply
\Cref{lem:1-mod-r-partition}, because the $\Z$-valued weight function $w$ can be
converted into a suitable $\Z^+$-valued weight function $w'$ by choosing $w'(A)
\in \{0, \dotsc, r-1\}$ such that $w'(A) \equiv w(A) \pmod{r}$, for all $A$.

\begin{proof}[Proof of \Cref{lem:1-mod-r-partition-primed}.]
    Since $P$ is finite, it can be embedded into $\BL{d}$, for some $d$, by an
    embedding which maps the greatest and the least elements of $P$ to the
    corresponding elements of $\BL{d}$. We will show that $n = 2d-1$ works.

    We say that a function $f : \BL{n} \to \Z$ is \emph{realisable} if there
    exists a $\Z$-valued weight function $w$ on the copies of $P$ in $\BL{n}$,
    such that $N_w(x) = f(x)$ for all $x \in \BL{n}$. We note that
    if $f, g$ are realisable functions, then so are $f+g$ and $f-g$. Our aim is
    to show that the constant $1$ function on $\BL{n}$ is realisable.

    For any $A \subset \BL{n}$, we define $\ind{A} : \BL{n} \to \{0,1\}$ to be
    the indicator function of $A$. Clearly, if $A$ is a copy of $P$, then
    $\ind{A}$ is realisable.

    We denote the greatest and the least elements of $\BL{n}$ by $x_+, x_-$. Let
    $x \in \BL{n}$. If $|x| \ge d$, then there exists an embedding $\BL{d} \to
    \BL{n}$ which maps the greatest element of $\BL{d}$ to $x$. Therefore, in
    $\BL{n}$, we can find a copy of $P$ whose greatest element is $x$. We denote
    this copy by $A$. Moreover, if we denote $B = A \sm \{x\}$, then $B \cup
    \{x\}$ and $B \cup \{x_+\}$ are copies of $P$. Therefore, the function
    $\ind{\{x\}} - \ind{\{x_+\}} = \ind{B\cup\{x\}} - \ind{B\cup\{x_+\}}$ is
    realisable.

    Similarly, if $|x| \le d$, then there exists an embedding $\BL{d} \to
    \BL{n}$ which maps the least element of $\BL{d}$ to $x$. Then we can find a
    copy of $P$ in $\BL{n}$, which we denote by $A$, with the property that $x$
    is the least element of $A$. We write $B = A \sm \{x\}$ and observe that $A
    \cup \{x\}$ and $A \cup \{x_-\}$ are copies of $P$. Therefore, the function
    $\ind{\{x\}} - \ind{\{x_-\}} = \ind{B\cup\{x\}} - \ind{B\cup\{x_-\}}$ is
    realisable.

    In particular, for any $x \in \BL{n}$, at least one of the functions
    $\ind{\{x\}} - \ind{\{x_+\}}$ and $\ind{\{x\}} - \ind{\{x_-\}}$ is
    realisable.  Moreover, if $|x| = d$, then both of them are. Therefore, by
    choosing any $x_0 \in \BL{n}$ with $|x_0| = d$, we can see that
    $\ind{\{x_+\}} - \ind{\{x_-\}} = (\ind{\{x_0\}} - \ind{\{x_-\}}) -
    (\ind{\{x_0\}} - \ind{\{x_+\}})$ is realisable. We conclude that, in fact,
    for any $x,y \in \BL{n}$, the function $\ind{\{x\}} - \ind{\{y\}}$ is
    realisable.

    Let $f, g : \BL{n} \to \Z$ be two functions that satisfy $\sum_{x\in\BL{n}}
    f(x) = \sum_{x\in\BL{n}}g(x)$. Then the difference $f-g$ can be expressed as
    a sum of functions of the form $\ind{\{x\}} - \ind{\{y\}}$ with $x, y \in
    \BL{n}$, so $f-g$ is realisable. Hence, $f$ is realisable if and only if $g$
    is realisable. Therefore, to prove that the constant $1$ function is
    realisable, it is enough to find one realisable function $f$ such that
    $\sum_{x\in\BL{n}} f(x) = 2^n$. However, we know that $|P|=2^k$ and, trivially,
    $k \le n$, so we can take $f = 2^{n-k}\cdot\ind{A}$ for any $A \subset \BL{n}$
    which is a copy of $P$.
\end{proof}

\section{Concluding remarks and open problems}
\label{sec:final}

    In the proof of \Cref{thm:main} we do not explicitly keep track of a value
    of $n$ that would be sufficient. This is to make the proof more readable.
    Moreover, we did not put any serious effort into finding a good bound. The
    following bound can be extracted from the proof.

    \begin{thmprimed}{thm:main}
        There exists an absolute constant $C>0$ with the following property. Let
        $P$ be a poset of size $2^k$ with a greatest and a least element. Then,
        for any integer $n \ge 2^{|P|^C}$, the Boolean lattice $\BL{n}$ can be
        partitioned into copies of $P$.
    \end{thmprimed}

    It is interesting to ask what happens if $P$ does not satisfy the conditions
    required by \Cref{thm:main}. Of course, then it is impossible to partition
    $\BL{n}$ into copies of $P$. However, what if we are allowed to leave a small
    number of elements of $\BL{n}$ uncovered? For example, if $P$ does not have
    a greatest and/or a least element, then the greatest and/or the least
    element of $\BL{n}$ are the only ones that obviously cannot be covered by
    copies of $P$. Lonc~\cite{Lonc1991} conjectured that, if $n$ is large and if
    an obvious divisibility condition is satisfied, then $\BL{n}$ with its
    greatest and least element removed can be partitioned into copies of $P$.

    \begin{conj}[Lonc] \label{conj:loncs-open}
        Let $P$ be a finite poset. If $n$ is sufficiently large and if $|P|$
        divides $2^n - 2$, then it is possible to partition $\BL{n}$, with its
        greatest and least element removed, into copies of $P$.
    \end{conj}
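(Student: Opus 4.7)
The plan is to mimic the two-stage structure used for \Cref{thm:main}: first produce weak partitions of $\BL{n}\sm\{\es,[n]\}$ by copies of $P$, and then bootstrap them to an actual partition via a product-theorem analogue of \Cref{thm:general}. Here $\BL{n}\sm\{\es,[n]\}$ has size $2^n-2$, and the divisibility hypothesis $|P|\mid 2^n-2$ plays the role of the condition $|P|=2^k$.

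First I would establish analogues of \Cref{lem:r-partition-primed} and \Cref{lem:1-mod-r-partition-primed} in the new setting: for every positive integer $r$ there should exist $n$ and a rational non-negative (respectively integer) weight function on copies of $P$ contained in $\BL{n}\sm\{\es,[n]\}$ that gives every element of $\BL{n}\sm\{\es,[n]\}$ multiplicity $r$ (respectively $1\bmod r$). The symmetrisation argument for the $r$-partition should carry over almost verbatim, since after averaging over permutations of $\{1,\dotsc,n\}$ the problem reduces to choosing rational weights on $d$-scattered $|P|$-element subsets of $\{1,\dotsc,n-1\}$ whose row-sums match $\binom{n}{k}$ on the interior levels, and the middle binomials dominate when $n$ is large enough to apply a variant of \Cref{prop:inpowerset}. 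For the $(1\bmod r)$-partition the argument of \Cref{lem:1-mod-r-partition-primed} critically used a greatest and least element of $P$; without them I would instead fix a single reference element $p_0\in P$ and show that the realisable functions still contain every indicator difference $\ind{\{x\}}-\ind{\{y\}}$ for $x,y\in\BL{n}\sm\{\es,[n]\}$, so that the realisable subgroup equals $\{f:\BL{n}\sm\{\es,[n]\}\to\Z:\sum_x f(x)\equiv 0\pmod{|P|}\}$. The hypothesis $|P|\mid 2^n-2$ is exactly what is needed so that the constant $1$ function lies in this subgroup.

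Second I would adapt \Cref{thm:general}. Identifying $\BL{nm}=(\BL{n})^m$, the set $\BL{nm}\sm\{\es,[nm]\}$ is obtained from $(\BL{n})^m$ by deleting only the two global extrema. Running the scheme of \Cref{sec:general} with $S=\BL{n}$ and with $\F$ equal to the family of copies of $P$ inside $\BL{n}\sm\{\es,[n]\}$, the slicing and gap-filling manipulations behind \Cref{prop:blowup,prop:onecorner,prop:modify,prop:fillin,prop:manychoices} should ideally still produce a partition of $S^m$ with just those two points missing, perhaps after modifying a few copies of $P$ in a neighbourhood of each extremum.

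The main obstacle, I expect, is this last step. The propositions in \Cref{sec:general} lean on the fact that every point of $S^n$ can be covered by a copy of some member of $\F$ obtained by coordinate permutation and multiplication by a singleton; this fails for points all of whose coordinates are $\es$ or all are $[n]$, because no member of our $\F$ contains such a point. Reserving a bounded number of copies to handle the two missing extrema, and then propagating this modification through the inductive slicing arguments of \Cref{prop:fillin} and \Cref{prop:manychoices} without breaking the divisibility accounting inherited from the weak partitions, is where the bulk of the work would lie; the $2$ appearing in $2^n-2$ should provide precisely the slack needed for this adjustment.
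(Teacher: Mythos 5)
This statement is an \emph{open conjecture} due to Lonc, recorded in the paper's concluding section; the paper does not prove it, and explicitly leaves it as an open problem (observing only that it would imply a positive answer to \Cref{conj:open} when $4 \nmid |P|$). So there is no ``paper's own proof'' to compare against, and what you have written is, by your own admission, a plan with the hard steps left unresolved rather than a proof.

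That said, let me flag where I think your sketch has the most serious gaps. For the $(1 \bmod r)$-partition analogue, the argument of \Cref{lem:1-mod-r-partition-primed} leans crucially on $P$ having a greatest and least element: the key move is to take a copy $A$ of $P$ with top element $x$, and observe that replacing $x$ by the global top $x_+$ still gives a copy, so $\ind{\{x\}} - \ind{\{x_+\}}$ is realisable. If $P$ has no greatest element, a maximal element of a copy can in general \emph{not} be slid upward without changing the induced poset on the rest of the copy, so fixing a ``reference element $p_0$'' does not hand you the indicator differences. Characterising the realisable subgroup for an arbitrary $P$ is genuinely new work, not a routine adaptation. For the product step, the difficulty is worse than ``a bounded neighbourhood of the extrema'': \Cref{prop:blowup}--\Cref{prop:manychoices} all operate by tiling $U^k$ or $S \times U^k$ with copies placed via coordinate permutation and product with a singleton, and every such copy of a member of $\F$ sits strictly between the global extrema of the ambient $\BL{nm}$ \emph{only} if none of its singleton coordinates equals $\es$ or $[n]$; the exceptional region is an entire diagonal slab, not two isolated points, and the ``$-2$'' in $2^n-2$ does not obviously give you room to absorb it. Absent a concrete mechanism to handle these two obstructions, the proposal does not establish the conjecture.
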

    
    In the spirit of Griggs' conjecture it is reasonable to hope that, even if
    we do not impose any divisibility conditions for $|P|$, for sufficiently
    large $n$, $\BL{n}$ can be partitioned into copies of $P$ and a set of size
    $c$, where $c < |P|$. Or perhaps one can bound $c$ by a weaker constant
    which depends on $P$. 

    \begin{restatable}{qn}{open} \label{conj:open}
        Let $P$ be a finite poset. Must there exist a constant $c = c(P)$ such
        that, for any $n$, it is possible to cover all but at most $c$ elements
        of $\BL{n}$ by disjoint copies of $P$?
    \end{restatable}

    We remark that \Cref{conj:loncs-open} would give a positive answer to
    \Cref{conj:open} in the case where $|P|$ is not a multiple of $4$.

\renewcommand{\biblistfont}{\normalfont\normalsize}
\bibliography{poset}

\end{document}